\documentclass[11pt, reqno]{amsart}
\usepackage{amsmath,amssymb,mathtools}
\usepackage{microtype}
\usepackage[margin=1.08in]{geometry}
\usepackage{enumitem}
\usepackage{booktabs,longtable,array}
\usepackage{xcolor}
\usepackage[colorlinks=true,linkcolor=blue!55!black,citecolor=blue!55!black,urlcolor=blue!55!black]{hyperref}
\usepackage{fancyhdr}

\usepackage{aliascnt} 
\usepackage{cleveref} 

\numberwithin{equation}{section}

\theoremstyle{plain}
\newtheorem{theorem}{Theorem}[section]

\newaliascnt{lemma}{theorem}
\newtheorem{lemma}[lemma]{Lemma}
\aliascntresetthe{lemma}

\newaliascnt{proposition}{theorem}
\newtheorem{proposition}[proposition]{Proposition}
\aliascntresetthe{proposition}

\newaliascnt{corollary}{theorem}

\aliascntresetthe{corollary}

\theoremstyle{definition}
\newaliascnt{definition}{theorem}
\newtheorem{definition}[definition]{Definition}
\aliascntresetthe{definition}

\newaliascnt{remark}{theorem}
\newtheorem{remark}[remark]{Remark}
\aliascntresetthe{remark}

\newcommand{\Ric}{\operatorname{Ric}}

\newcommand{\Vol}{\operatorname{Vol}}

\newcommand{\R}{\mathbb{R}}

\newcommand{\dd}{\,\mathrm{d}}

\author{Guosheng Jiang}
\address[Guosheng Jiang]{School of Mathematics, Shandong University, Jinan 250100, China}
\email{gsjiang@sdu.edu.cn}

\author{Mingxiang Li}
\address[Mingxiang Li]{Department  of Applied  Mathematics, The Hong Kong Polytechnic University, Hong Kong, China}
\email{mingxiang.li@polyu.edu.hk}

\author{Zhehui Wang}
\address[Zhehui Wang]{School of Sciences, Great Bay University, Dongguan 523000, China}
\email{wangzhehui@gbu.edu.cn}
\keywords{Biharmonic function, Cheng-Yau estimate, Liouville theorem}

\title{A Cheng-Yau type estimate for positive biharmonic functions}
\begin{document}

\begin{abstract}
We establish a Cheng--Yau type estimate for positive biharmonic
functions on complete Riemannian manifolds with Ricci curvature satisfies $\Ric_g \ge -(n-1)K g$. If $u$ is a positive biharmonic function in $B_{2R}(p)$, then
$$
-\frac{\Delta_g u}{u}
+\frac{1}{8n}\frac{|\nabla u|_g^2}{u^2}
\le C_n\left(R^{-2}+K\right)
\quad\text{on }B_R(p).
$$
Further, if the Ricci curvature is nonnegative, every global positive biharmonic
function satisfies $\Delta_g u\equiv c$ and the sharp estimate
$|\nabla u|_g^2\le2cu$ for some nonnegative constant $c$. We also show that every positive
$k$-polyharmonic function has nonnegative constant
$(k-1)$-st Laplacian and growth of order at most $2k-2$.

\end{abstract}
\maketitle

\section{Introduction}

The classical Liouville theorem plays an important role in analysis, which asserts that every positive harmonic function on $\mathbb{R}^n$ must be a constant. In the 1970s, Yau \cite{Yau} extended such classical result to complete manifolds with nonnegative Ricci curvature.
\begin{theorem}[Yau's theorem]\label{Yau's theorem}
    Let $(M^n,g)$ be a complete Riemannian manifold with nonnegative Ricci curvature. Each positive harmonic function  on $M^n$ must be a constant.
\end{theorem}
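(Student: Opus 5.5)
The plan is to derive Yau's theorem from the local Cheng--Yau gradient estimate, proved by Bochner's formula and a cutoff maximum-principle argument. Let $u>0$ be harmonic on $M$ and set $f=\log u$. Then
$$
\Delta f=-|\nabla f|^2 .
$$
Put $w=|\nabla f|^2$. Bochner's formula with $\Ric\ge 0$, together with $|\nabla^2 f|^2\ge (\Delta f)^2/n = w^2/n$, gives
$$
\tfrac12\Delta w \ge \tfrac1n w^2 - \langle \nabla f,\nabla w\rangle .
$$
The key gain is the quadratic term $w^2/n$.

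Next I localize. Fix $p\in M$ and $R>0$, and let $\phi=\psi(r/R)$ be a cutoff with $r=d(p,\cdot)$, equal to $1$ on $B_R(p)$ and supported in $B_{2R}(p)$. The function $\psi$ is chosen with
$$
|\psi'|^2/\psi\le C \quad\text{and}\quad \psi''\ge -C .
$$
By Laplacian comparison for $\Ric\ge0$ we have $\Delta r\le (n-1)/r$ in the barrier sense. Together with $r\ge R$ on the support of $\psi'$, this gives $\Delta\phi\ge -C_n/R^2$. The points where $r$ is not smooth are handled by Calabi's trick.

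Now consider $G=\phi w$ at an interior maximum point $x_0$ in $B_{2R}$. There $\nabla G=0$ and $\Delta G\le 0$. Multiply the differential inequality by $\phi$ and substitute $\nabla w=-w\nabla\phi/\phi$. The cross terms are $\langle\nabla f,\nabla\phi\rangle w$ and $|\nabla\phi|^2 w/\phi$. Using $|\langle\nabla f,\nabla\phi\rangle|\le w^{1/2}|\nabla\phi|$ and Young's inequality, they are absorbed into $\frac{1}{2n}\phi w^2$. This should yield
$$
G(x_0)^2\le C_n R^{-2}G(x_0)+C_n R^{-2}G(x_0)^{3/2}.
$$
Handling the $G^{3/2}$ term is the fiddly point, and it is where I expect the main obstacle. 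The fix is Young's inequality on $w^{3/2}|\nabla\phi|$, weighted by $\phi^{1/2}$. This gives $G\le C_n/R^2$ on $B_{2R}$, hence
$$
|\nabla u|^2/u^2\le C_n R^{-2}\quad\text{on } B_R(p).
$$

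Finally, completeness lets $R\to\infty$ with $p$ fixed, so $\nabla u(p)=0$ for every $p$. Hence $u$ is constant.
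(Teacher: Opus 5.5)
Your argument is correct and is the classical Cheng--Yau proof. The paper does not reprove this theorem; it cites Yau. However, its proof of Theorem~\ref{thm:local-CY} runs the same Bochner--cutoff--maximum-principle scheme, and with $q\equiv 0$ and $K=0$ that proof specializes to essentially your computation.

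There is one slip, though it does not affect the conclusion. By scaling, the $G^{3/2}$ term should read $C_nR^{-1}G(x_0)^{3/2}$, not $C_nR^{-2}G(x_0)^{3/2}$, since $\phi w^{3/2}|\nabla\phi|\le C(\phi w)^{3/2}/R$ by $|\nabla\phi|\le C\phi^{1/2}/R$. It is this corrected inequality that gives $G\le C_nR^{-2}$. You also tacitly use $\psi'\le 0$ in the Laplacian comparison step, so state it as a property of the cutoff.
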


In the pioneering work of Yau \cite{Yau} and the subsequent work of Cheng-Yau \cite{Cheng-Yau} and Li-Yau \cite{LiYau},  the gradient estimate they developed on manifolds has become a fundamental tool in geometric analysis. Yau posed the problem of whether the linear space of polynomial growth harmonic functions has finite dimension (see \cite{Y3, Y5, Y4}, and a survey in \cite{L1}). Specifically, for a fixed nonnegative constant $d$, define
\begin{equation*}
    \mathcal{H}^d(M)=\left\{u:\, \Delta_g u=0,\ \ |u(x)|\le C(1+d_g(x,p))^d\right\}
\end{equation*}
for some point $p\in M^n$ and some constant $C>0$. Under the assumption that the Ricci curvature is nonnegative, Yau conjectured that $\dim \mathcal{H}^d(M)<+\infty$ for all $d\ge 0$. This problem was eventually resolved by Li and Tam \cite{Li-Tam} for $n=2$ (another proof see \cite{DF}) and by Colding and Minicozzi \cite{CM97} for $n\ge 3$. Moreover, the harmonic functions on manifolds have a significant impact on the study of the structure of complete open manifolds (see \cite{CCM, CM-CPAM, Li-Tam-92-JDG}).  Lin and Zhang \cite{linzh2019} studied the linear space of polynomial-growth ancient solutions of heat equations on manifolds with nonnegative Ricci curvature (See \cite{rowi1959,Widder} for related discussions on Euclidean space). Later, Colding and Minicozzi \cite{coldmin2021} obtained the sharp bounds for the dimension of this linear space.

For polyharmonic functions on Euclidean space, however, the situation is quite different. First, there do exist positive non-constant polyharmonic functions; for example, for $x=(x_1,\ldots,x_n)\in\mathbb R^n$, the function $x_1^2+1$ satisfies 
\begin{equation}\label{nonconstant-exm}
    \Delta^k (x_1^2+1)\equiv 0
\end{equation}
 for any integer $k\ge 2$. It is natural to ask whether one can obtain some control and characterization of positive polyharmonic functions. In 1966, Kuran \cite{Kuran} showed that any positive polyharmonic function $u$ (i.e., $\Delta^k u=0$) must be a positive polynomial of degree at most $2k-2$. This result provides a fairly comprehensive characterization of positive polyharmonic functions.

 Polyharmonic functions play a significant role in higher-order conformal geometry on conformally flat manifolds, particularly in the study of $Q$-curvature (see \cite{Martinazzi}). More  applications  can be found the monograph \cite{GGS}. The conformally covariant GJMS operator (known as the Paneitz operator in the fourth-order case) is a higher-order operator. The leading term of the GJMS operator $P_{2k,g}$ is $(-\Delta_g)^k$. When the manifold is Ricci-flat (see \cite{Gover}), one has
$$
P_{2k,g}=(-\Delta_g)^k.
$$
Further discussions on the GJMS operator can be found in the survey \cite{CG}.

On the other hand, biharmonic functions on manifolds arise naturally in the study of biharmonic maps between manifolds (see \cite{ChangWangYang, MonOni}). The regularity theory for biharmonic maps is now well developed, with substantial results obtained in, e.g., \cite{ChangWangYang, Struwe, Wang}. In contrast, Liouville-type theorems for biharmonic functions on manifolds appear to be comparatively less explored. Recently, Wang--Zhu \cite{WZ} and Bravo--Cortissoz \cite{BC1} investigated biharmonic functions of polynomial growth on manifolds with nonnegative Ricci curvature and established Liouville theorems. Subsequently, Bravo--Cortissoz \cite{BC2} extended this result to the polyharmonic setting. Wang--Zhu \cite{WZ} also proved the space of polynomial-growth biharmonic function is finite-dimensional. A key ingredient in these works is a Caccioppoli-type inequality (see \cite[Lemma 2.1]{WZ} and \cite[Corollary 8]{BC2}). Using this estimate together with a suitable correspondence between polyharmonic and harmonic functions, the authors are able to bound the dimension of the space of polynomial growth polyharmonic functions in terms of the known dimension bounds for harmonic functions of polynomial growth. As a direct consequence of these Liouville theorems, it follows that every polynomial growth kernel of the GJMS operator on Ricci-flat manifolds has finite dimension. Such kernels were previously studied in the context of conformally flat manifolds in \cite{Li}.

The polynomial growth assumption plays a pivotal role in the proofs of \cite{BC1, BC2, WZ}. This naturally prompts the question of whether one can establish analogous control for one-sided bounded biharmonic functions, in the spirit of Yau's theorem (see \Cref{Yau's theorem}). More precisely, we seek to determine what can be concluded for positive biharmonic functions on manifolds with nonnegative Ricci curvature. The example \eqref{nonconstant-exm} demonstrates that such functions are not necessarily constant; nonetheless, an analogue of Kuran's theorem \cite{Kuran} is expected to hold.  With this in mind, our first result is the following.
Throughout this paper, the manifold is assumed to be connected.

\begin{theorem}
\label{thm:sharp-poisson}
Let $(M^n,g)$ be a complete Riemannian manifold with nonnegative Ricci curvature.
Suppose that $u$ is a positive biharmonic function on $M^n$. Then,  $\Delta_g u$ must be a nonnegative constant $c$  and the gradient of $u$ is controlled by 
\begin{equation}
\label{eq:sharp-v}
  |\nabla u|_g^2\le 2cu.
\end{equation}
\end{theorem}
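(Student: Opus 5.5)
Set $v=\Delta_g u$, so that $\Delta_g v=0$. The plan has three steps: prove a local Cheng--Yau type estimate on balls, let the radius go to infinity to obtain a global inequality, and then combine it with Yau's theorem (\Cref{Yau's theorem}).

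\textbf{Step 1: local estimate.} On $B_{2R}(p)$ I would prove the estimate stated in the abstract, with $K=0$:
\begin{equation*}
-\frac{v}{u}+\frac{1}{8n}\frac{|\nabla u|_g^2}{u^2}\le C_nR^{-2}\quad\text{on }B_R(p).
\end{equation*}
I would apply the Bochner formula to the quantity $F=-\frac{v}{u}+a\frac{|\nabla u|^2}{u^2}$ with $a>0$ small. Write $f=\log u$, so that $\Delta f=\frac{v}{u}-|\nabla f|^2$.
\begin{itemize}
\item Since $v$ is harmonic, $\Delta\!\left(\frac{v}{u}\right)$ involves only $\frac{v^2}{u^2}$ and gradient cross terms.
\item For $|\nabla f|^2$, Bochner with $\Ric\ge0$ and the refined Kato/trace inequality $|\nabla^2 f|^2\ge \frac1n(\Delta f)^2$ give a term $\frac{2}{n}\left(\frac{v}{u}-|\nabla f|^2\right)^2$.
\item At the maximum point of $\phi F$, for a Calabi-type cutoff $\phi$ supported in $B_{2R}$, the coercive quadratic terms (of order $F^2$) should dominate the cross terms once $a$ is small, e.g.\ $a=\frac{1}{8n}$.
\end{itemize}
This yields $\phi F\le C_nR^{-2}$. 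Laplacian comparison for $\Ric\ge0$ handles the $\Delta\phi$ and $|\nabla\phi|^2/\phi$ terms, and Calabi's trick handles the cut locus.

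This step is the main obstacle. $F$ is not sign-definite, so the maximum-principle argument has to use $F>0$ at the maximum point to absorb the cross terms $\langle\nabla f,\nabla(v/u)\rangle$. My first attempt would be to set $w=v/u$, note $\Delta w=-2\langle\nabla f,\nabla w\rangle-w\,\Delta f$, and use $\nabla F=0$ at the maximum to eliminate $\nabla w$.

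\textbf{Step 2: global consequence.} Letting $R\to\infty$ gives $-v+\frac{1}{8n}\frac{|\nabla u|^2}{u}\le0$ on all of $M$. In particular $v\ge0$. Since $v$ is a nonnegative harmonic function, \Cref{Yau's theorem} applied to $v+\varepsilon$ shows that $v$ is a constant $c\ge0$.

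\textbf{Step 3: sharp constant.} Now $\Delta u=c$, and we already know $|\nabla u|^2\le 8nc\,u$, so the remaining task is to improve $8n$ to $2$. Set $G=|\nabla u|^2-2cu$. Bochner gives
\begin{equation*}
\Delta|\nabla u|^2=2|\nabla^2u|^2+2\Ric(\nabla u,\nabla u)\ge 2|\nabla^2u|^2,
\end{equation*}
and since $\Delta(2cu)=2c^2$, we get $\Delta G\ge 2|\nabla^2 u|^2-2c^2$. The trace inequality $|\nabla^2u|^2\ge c^2/n$ only gives $\Delta G\ge 2c^2\left(\frac1n-1\right)$, which is not enough.

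Instead I would rerun the Step 1 argument with $K=0$ on the quantity $\frac{|\nabla u|^2}{u}-2c$. Knowing $\Delta u=c$ exactly, one computes
\begin{equation*}
\Delta\frac{|\nabla u|^2}{u}=\frac{2|\nabla^2u|^2+2\Ric(\nabla u,\nabla u)}{u}-\frac{4\langle\nabla^2u(\nabla u),\nabla u\rangle}{u^2}-\frac{c\,|\nabla u|^2}{u^2}+\frac{2|\nabla u|^4}{u^3}.
\end{equation*}
Estimate $|\nabla^2 u|^2$ from below using both the trace and the direction $\nabla u$, i.e.\ $|\nabla^2u|^2\ge \frac{\langle\nabla^2u(\nabla u),\nabla u\rangle^2}{|\nabla u|^4}+\frac{(c-\nabla^2u(\nu,\nu))^2}{n-1}$ with $\nu=\nabla u/|\nabla u|$. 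A localized maximum principle applied to $H=\frac{|\nabla u|^2}{u}$ should give $H\le 2c+C/R$, and letting $R\to\infty$ then yields \eqref{eq:sharp-v}. I expect this to go through because the one-dimensional model $u=\frac c2x^2+b$ saturates the inequality.
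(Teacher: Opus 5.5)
\paragraph{Steps 1 and 2.} These are fine.

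Step~1 is exactly how the paper proves Theorem~\ref{thm:local-CY}. It uses the same quantity $\Phi=a|\nabla f|_g^2-q$ with $a=1/(8n)$. Your idea of eliminating $\nabla q$ is carried out as follows:
\begin{itemize}
\item write $\nabla q=a\nabla|\nabla f|_g^2-\nabla\Phi$;
\item use $Lq=-q^2$ for $L=\Delta_g+2\langle\nabla f,\nabla\cdot\rangle$;
\item complete the square $|\nabla^2f+a\,df\otimes df|_g^2$;
\item absorb $\nabla\Phi$ into the drift operator $L_a$.
\end{itemize}

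Step~2 is a genuine and welcome shortcut. The paper obtains $\Delta_g u\equiv c\ge0$ from Theorem~\ref{thm:main}, which needs the parabolic Harnack inequality, Ekeland's principle and positivity of the caloric polynomial $u+t\Delta_g u$. You instead read off $\Delta_g u\ge\frac1{8n}|\nabla u|_g^2/u\ge0$ from the local estimate as $R\to\infty$, and conclude with Yau's theorem. That is correct and much more elementary for $k=2$. It also gives the coarse bound $|\nabla u|_g^2\le 8ncu$ at the same time.

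\paragraph{The gap: Step 3, in the localization.} Set $H=|\nabla u|_g^2/u$ and $\mathcal L=u\Delta_g+\langle\nabla u,\nabla\cdot\rangle$. Your formula for $\Delta_g H$ gives
\[
\mathcal L H=2\Bigl|\nabla^2u-\frac{du\otimes du}{2u}\Bigr|_g^2+2\Ric_g(\nabla u,\nabla u)+\tfrac12H(H-2c)\ \ge\ \tfrac12H(H-2c).
\]
So only this square is needed, not the trace/direction splitting.

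However, the coercive term carries the weight $1/u$. For $\Delta_g$ itself it is $H(H-2c)/(2u)$, and $u$ can be of size $cR^2$ on $B_{2R}(p)$, as in the model $\frac c2x_1^2+b$. Take a standard cutoff $\phi$ on $B_{2R}(p)$. At the maximum of $\phi H$ the error terms are $u|\Delta_g\phi|$, $u|\nabla\phi|_g^2/\phi$ and $|\nabla u|_g|\nabla\phi|_g$. These can only be bounded by $C(n)\bigl(c+u(p)R^{-2}\bigr)$, which does not tend to zero. So the argument returns only $H\le(2+C(n))c+o(1)$, which is no better than Step~2, and not $H\le 2c+C/R$.

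\paragraph{The missing idea.} You need a localization whose error vanishes. The paper uses a global Omori--Yau type argument:
\begin{itemize}
\item By Step~2, $\sup_MH\le 8nc$ and $\sqrt u$ is Lipschitz.
\item Hence $\psi=\log(1+r^2)$ satisfies $\mathcal L\psi\le C$ on all of $M$, by Laplacian comparison and Calabi's trick at the cut locus.
\item So $H-\epsilon\psi$ attains a maximum at some $x_\epsilon$. There, $\tfrac12H(x_\epsilon)\bigl(H(x_\epsilon)-2c\bigr)\le\mathcal LH(x_\epsilon)\le\epsilon\,\mathcal L\psi(x_\epsilon)\le C\epsilon$.
\item Since $H(x_\epsilon)\to\sup_MH$ as $\epsilon\to0$, this gives $\sup_MH\le2c$.
\end{itemize}

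The paper phrases this with $v=\sqrt{2u/c}$ and $w=|\nabla v|_g^2=H/(2c)$, so that $v\Delta_gv+w=1$. The Bochner inequality $v^2\Delta_gw+2v\langle\nabla v,\nabla w\rangle\ge2w(w-1)$, with the Hessian term simply dropped, is the same inequality, because $\mathcal L=\frac c2\bigl(v^2\Delta_g+2v\langle\nabla v,\nabla\cdot\rangle\bigr)$.

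A cutoff in the variable $\log(1+r^2)/\log(1+R^2)$ would also repair your localized version, with errors $O(1/\log R)$.
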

\begin{remark}
In particular, when $u$ is harmonic, \eqref{eq:sharp-v} forces $u$ to be constant, which is precisely Yau's theorem. Moreover, if $c\neq 0$, \eqref{eq:sharp-v} is equivalent to
$|\nabla \sqrt{u}|_g^2 \leq c/2,$
which in turn implies the quadratic growth upper bound of $u$ as follows
$$
u(x) \leq \frac{c}{2} d_g(x,p)^2 + Cd_g(x,p)+C
$$
for some fixed base point $p \in M^n$ and constant $C$. The example \eqref{nonconstant-exm} demonstrates that the coefficient of the quadratic term is sharp.
\end{remark}

 Motivated by the Cheng–Yau estimate \cite{Cheng-Yau},  we ask whether an analogous estimate can be obtained for positive biharmonic functions. In contrast to the harmonic case, the gradient estimate here involves the term 
$\Delta_g u$. Our second result in this paper is stated as follows.

\begin{theorem}
\label{thm:local-CY}
Let $(M^n,g)$ be a complete Riemannian manifold with 
$$
    \Ric_g\ge -(n-1)Kg
    \qquad\text{in }B_{2R}(p)\subset\subset M,
$$
 where $p\in M^n$, and $K\ge0$ and $R>0$ are constants.
Assume that $u\in C^4(B_{2R}(p))$ is a positive biharmonic  function in $B_{2R}(p)$.
Then, there exists some positive constant $C(n)$ depending only on $n$,   such that
\begin{equation}
\label{eq:local-main}
    -\frac{\Delta_g u}{u}
    +\frac{1}{8n}\cdot\frac{|\nabla u|_g^2}{u^2}
    \le
    C(n)\left(\frac1{R^2}+K\right)
    \qquad\text{in }B_R(p).
\end{equation}
In particular, we have 
\begin{equation}
\label{eq:local-gradient-root}
    \frac{|\nabla u|_g}{u}
    \le
    C(n)\left(
        \frac1R+\sqrt K
        +\sqrt{\frac{(\Delta_g u)_+}{u}}
    \right)
    \qquad\text{in }B_R(p).
\end{equation}
\end{theorem}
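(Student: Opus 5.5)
The plan is a maximum-principle argument of Cheng--Yau type, applied to an auxiliary quantity built from $f=\log u$ and the harmonic function $v=\Delta_g u$. Set
$$
w=\frac{\Delta_g u}{u}=\frac{v}{u},\qquad F=|\nabla f|_g^2,\qquad Q=-w+aF,\quad a=\frac1{8n}.
$$
Then $Q$ is exactly the left-hand side of \eqref{eq:local-main}, and $\Delta_g f=w-F$. Throughout I work with the drift operator $\mathcal L=\Delta_g+2\langle\nabla f,\nabla\,\cdot\,\rangle$.

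\textbf{Step 1: equation for $w$.} Since $\Delta_g v=0$ and $\nabla v/u=\nabla w+w\nabla f$, a direct computation gives
$$
\mathcal L w=\Delta_g w+2\langle\nabla f,\nabla w\rangle=-w^2 .
$$
In particular $-w$ is an $\mathcal L$-subsolution with good term $w^2$.

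\textbf{Step 2: Bochner for $F$.} Bochner's formula with $\Ric_g\ge-(n-1)K$ and $\Delta_g f=w-F$ yields
$$
\mathcal L F\ge 2|\nabla^2 f|^2+2\langle\nabla f,\nabla w\rangle-2(n-1)KF ,
$$
and the trace inequality gives $|\nabla^2 f|^2\ge (w-F)^2/n$.

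\textbf{Step 3: combine.} Adding the two relations,
$$
\mathcal L Q\ge w^2+2a|\nabla^2 f|^2+2a\langle\nabla f,\nabla w\rangle-2a(n-1)KF .
$$

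\textbf{Step 4: the cross term (main obstacle).} The difficulty is $2a\langle\nabla f,\nabla w\rangle$, which has no sign. At an interior maximum of $\phi Q$ with $\phi$ a cutoff, we have $\nabla w=a\nabla F-Q\nabla\phi/\phi$. Also $\nabla F=2\nabla^2f(\nabla f,\cdot)$, so
$$
2a\langle\nabla f,\nabla w\rangle\ge -4a^2|\nabla^2 f|\,F-(\text{cutoff terms}).
$$
I would absorb the main part by splitting it as
$$
4a^2|\nabla^2f|F\le a|\nabla^2 f|^2+4a^3F^2 .
$$
This leaves
$$
w^2+\frac an(w-F)^2-4a^3F^2 .
$$
With $a=1/(8n)$, the quadratic form $w^2+\frac an(w-F)^2$ dominates a multiple $c_n a F^2$ with room to spare against $4a^3F^2=a\cdot\frac{F^2}{16n^2}$. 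I would check this by minimizing over $w$, which gives roughly $\frac{a}{n}F^2$ minus a lower-order correction. The outcome should be
$$
w^2+\frac an(w-F)^2-4a^3F^2\ge c_n(w^2+F^2)\ge c_n'Q^2 ,
$$
with the last inequality valid whenever $Q>0$, because then $Q\le |w|+aF$. The choice $a=1/(8n)$ is forced precisely here.

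\textbf{Step 5: localization.} Take $\phi=\psi(d(p,x)/R)$ supported in $B_{2R}$, using Calabi's trick at cut points. Laplacian comparison gives
$$
\Delta_g\phi\ge -C(n)\Big(\frac1{R^2}+\frac{\sqrt K}{R}\Big),\qquad \frac{|\nabla\phi|^2}{\phi}\le \frac{C}{R^2}.
$$
At the maximum point of $\phi Q$, where $Q>0$, the inequality $0\ge\mathcal L(\phi Q)$ produces:
\begin{itemize}
\item a term $2Q\langle\nabla f,\nabla\phi\rangle$, bounded by $CQ\sqrt F/R\le CQ^{3/2}/R$ times a power of $\phi$;
\item the cutoff remainder from Step 4, $2aQ\langle\nabla f,\nabla\phi\rangle/\phi$, which is of the same type;
\item the curvature term $KF\le CKQ$.
\end{itemize}
Multiplying through by $\phi$ then gives
$$
c_n(\phi Q)^2\le C(\phi Q)\Big(\frac1{R^2}+K\Big)+C(\phi Q)^{3/2}R^{-1},
$$
and Young's inequality yields $\phi Q\le C(n)(R^{-2}+K)$. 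This proves \eqref{eq:local-main} on $B_R$.

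\textbf{Step 6: the gradient bound.} Estimate \eqref{eq:local-gradient-root} follows at once:
$$
\frac{F}{8n}\le C(R^{-2}+K)+\frac{(\Delta_g u)_+}{u},
$$
and taking square roots gives the claim.
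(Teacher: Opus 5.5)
Your strategy is the paper's. You use the same $f=\log u$, $w=\Delta_g u/u$, drift operator $\mathcal L=\Delta_g+2\langle\nabla f,\nabla\cdot\rangle$, identity $\mathcal L w=-w^2$, auxiliary quantity $aF-w$ with $a=1/(8n)$, and a cutoff maximum principle. Two differences are cosmetic: you treat the cross term $2a\langle\nabla f,\nabla w\rangle$ only at the maximum point (the paper instead passes to $L_a=\Delta_g+2(1+a)\langle\nabla f,\nabla\cdot\rangle$), and you absorb it by AM--GM (the paper completes the square with $\nabla^2 f+a\,df\otimes df$). Your Step~4 conclusion $w^2+\frac an(w-F)^2-4a^3F^2\ge c_n(w^2+F^2)$ is correct, since the minimum over $w$ is $\bigl(\frac{1}{8n^2+1}-\frac{1}{128n^3}\bigr)F^2>0$. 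At the maximum point one actually has $\nabla w=a\nabla F+Q\nabla\phi/\phi$; the sign slip is harmless.

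The gap is in Step~5. You weaken the coercive term $c_n(w^2+F^2)$ to $c_n'Q^2$, and then estimate $Q\sqrt F/R\le CQ^{3/2}/R$ and $KF\le CKQ$. Both estimates require $F\le CQ$, which is false in general. Since $Q=aF-w$ and $w=\Delta_g u/u$ may be positive, $Q>0$ can be arbitrarily small compared with $F$ (for instance, $w$ just below $aF$). So the inequality $c_n(\phi Q)^2\le C(\phi Q)(R^{-2}+K)+C(\phi Q)^{3/2}R^{-1}$ is not justified.

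The repair uses information you already have: keep the full coercive term and measure everything against $Y=(w^2+F^2)^{1/2}$. At the maximum point, $0<Q\le CY$, $F\le Y$ and $\sqrt F\le Y^{1/2}$. Multiplying by $\phi$ and using $|\nabla\phi|\le C\sqrt\phi/R$, the quantity $Z=\phi Y$ satisfies $c_nZ^2\le C(R^{-2}+K)Z+CR^{-1}Z^{3/2}$. Hence $Z\le C(n)(R^{-2}+K)$, and $\phi Q\le CZ$ gives the bound. This is exactly how the paper closes the argument: it works with $\eta^2(q^2+|\nabla f|_g^4)^{1/2}$ and uses Young's inequality to absorb the curvature term into $-C_nK^2$.
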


\begin{remark}
If $u$ is  harmonic (certainly biharmonic), the estimate
\eqref{eq:local-gradient-root} reduces to the classical  Cheng--Yau estimate
$$
    \frac{|\nabla u|_g}{u}
    \le C(n)\left(\frac1R+\sqrt K\right).
$$
Thus, \eqref{eq:local-main} can be viewed as a natural fourth-order analogue of the
Cheng--Yau estimate.
\end{remark}

Kuran's theorem \cite{Kuran} states that any positive polyharmonic function on Euclidean space must be a polynomial. Theorem \ref{thm:sharp-poisson} concerns with the biharmonic case. In fact, for positive polyharmonic functions on manifolds with nonnegative Ricci curvature, we can effectively control the growth of their lower-order derivatives. Our third result is stated as  follows with the convention $\Delta_g^0u=u$.

\begin{theorem}\label{thm:main}
    Let $(M^n,g)$ be a complete Riemannian manifold with nonnegative Ricci curvature. Let $u\in C^{2k}(M)$ be a positive polyharmonic function of order $k \ge 2$, namely $\Delta_g^k u \equiv 0$. Then the following assertions hold:
    \begin{enumerate}
        \item $\Delta_g^{k-1} u \equiv c_u$ for some constant $c_u \ge 0$; \label{constpoly}
        \item for each $0 \le i \le k-1$, there exist a point $p \in M^n$ and a constant $C>0$ such that
        $$
        |\Delta_g^i u(x)| \le C \bigl(1 + d_g(x,p)\bigr)^{2k-2i-2} \quad \text{for all } x \in M^n.
        $$\label{growth}
        \item for each $0\le j\le k-2$, there exist a point $p \in M^n$ and a constant $C>0$ such that
        $$
        |\nabla\Delta_g^j u(x)| \le C \bigl(1 + d_g(x,p)\bigr)^{2k-2j-3} \quad \text{for all } x \in M^n.
        $$\label{gradgrowth}
    \end{enumerate}
\end{theorem}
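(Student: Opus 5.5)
We would argue by induction on the order $k$, using \Cref{thm:sharp-poisson} as the base case $k=2$ and the local estimate \Cref{thm:local-CY} as the engine for the inductive step. The key preliminary claim is that for a positive $k$-polyharmonic $u$, the function $w=\Delta_g^{k-2}u$ is biharmonic. If we can also show that $w+A$ is positive for a suitable constant, or more generally that $w$ is bounded below, then \Cref{thm:sharp-poisson} applied to $w+A$ gives $\Delta_g^{k-1}u=\Delta_g w\equiv c\ge0$. This is assertion \eqref{constpoly}.

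Lower bounds for the iterated Laplacians $\Delta_g^i u$ are therefore the central issue. We would obtain them by a mean-value/Green-function argument on balls. On $B_R(p)$ write $v_i=\Delta_g^i u$, so that $\Delta_g v_i=v_{i+1}$.

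Take the Dirichlet Green function $G_R$ of $B_R(p)$ and the representation
$$
v_i(p)=\int_{\partial B_R}v_i\,\dd\omega_R-\int_{B_R}G_R(p,y)\,v_{i+1}(y)\dd y,
$$
where $\omega_R$ is harmonic measure. Iterating from $i=0$ expresses $u(p)$ as boundary averages plus iterated Green potentials of the $v_j$. Positivity of $u$ at every point then forces asymptotic sign constraints: the top term $v_{k-1}$ must be nonnegative in an averaged sense, otherwise it would dominate as $R\to\infty$. Under $\Ric\ge0$, Li--Yau bounds give $G_R(p,y)\approx\int_{d(p,y)}^R t\,/\Vol B_t\dd t$ together with volume doubling. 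With these, the iterated potential of a function of size $R^{m}$ on $B_R$ behaves like $R^{m+2}$.

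A cleaner route, which we would try first, is to apply \Cref{thm:local-CY} (with $K=0$) to the biharmonic function $w$. This needs positivity of $w$, which we would prove by downward induction: show $v_{k-1}\ge0$, then $v_{k-2}$ bounded below, and so on.

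Concretely, assume $v_{k-1}$ is harmonic and $\inf v_{k-1}<0$. Then $\int_{B_R}G_R v_{k-1}$ is unbounded below in a controlled way. Using harmonicity of $v_{k-1}$ (mean-value inequalities under $\Ric\ge0$) and the Green representation, we would derive that $u(x)<0$ at some point for large $R$, a contradiction. Hence $v_{k-1}\ge0$, and by Yau's theorem (\Cref{Yau's theorem}) $v_{k-1}\equiv c_u\ge0$. This proof of \eqref{constpoly} avoids needing a lower bound on $w$.

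With $v_{k-1}=c$ in hand, the growth estimates follow by descending induction on $i$. Set $h=v_{k-2}-c\,\rho$, where $\rho$ solves $\Delta_g\rho=1$ with $|\rho|\lesssim d^2$; for instance, use Green potentials on exhausting balls, or, more robustly, avoid $\rho$ and apply Cheng--Yau-type arguments directly. Then $h$ is harmonic. We would control its growth via the representation formula for $u$ together with $u>0$, which bounds the boundary averages of the $v_i$ by powers of $R$. This gives $|v_i|\le C(1+d)^{2k-2i-2}$, which is \eqref{growth}.

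The gradient bounds \eqref{gradgrowth} then come from local elliptic (Cheng--Yau) gradient estimates on $B_{d/2}(x)$ for $\Delta_g v_j=v_{j+1}$: we get $|\nabla v_j|\lesssim d^{-1}\sup|v_j|+d\sup|v_{j+1}|\lesssim d^{2k-2j-3}$.

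The main obstacle is the positivity step, $v_{k-1}\ge0$ and the lower bounds for intermediate $v_i$, on a general manifold with no explicit polynomial structure. The whole difficulty lies in quantitative Green-function estimates (Li--Yau) that turn positivity of $u$ into sign and size information on higher Laplacians.
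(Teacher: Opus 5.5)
There is a genuine gap. For $k\ge3$ your plan has no mechanism that turns $u>0$ into sign or size information on $\Delta_g^iu$ for $i\ge1$, and that is the whole content of the theorem.

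\textbf{The Green-function iteration does not supply it.} Positivity of $u$ on $\partial B_R(p)$ gives only $\int_{\partial B_R}u\,\dd\omega_R=u(p)+\int_{B_R}G_R(p,y)v_1(y)\dd y>0$. Iterating, $\int G_Rv_1$ involves the harmonic extensions of the boundary values of $v_1,\dots,v_{k-2}$, whose sign and size are unknown. Bounding them is assertion (2), so using them here is circular.

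Even for the top harmonic term, the claim that a negative value of $v_{k-1}$ ``dominates'' rests on the Euclidean identity $\int_{B_R}G_R(p,y)h(y)\dd y=h(p)\int_{B_R}G_R(p,y)\dd y$ for harmonic $h$. That identity uses a radial Green function and the spherical mean value property; it is the Pizzetti structure behind Kuran's proof, and it has no analogue on a manifold with $\Ric_g\ge0$. There a nonconstant harmonic $h$ grows at least linearly. So a priori $\int_{B_R}G_R(p,y)h(y)\dd y$ is only bounded by $CR^2\sup_{B_R}|h|$, with no sign, and this can swamp $h(p)R^2$.

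\textbf{The other routes reduce to the same step.} Your ``cleaner route'' through Theorem~\ref{thm:local-CY} needs a lower bound on $\Delta_g^{k-2}u$, which you plan to obtain only after proving $\Delta_g^{k-1}u\ge0$. The induction on $k$ has no hypothesis to feed, since $\Delta_gu$ is $(k-1)$-polyharmonic but not positive. The growth claim (``$u>0$ bounds the boundary averages of the $v_i$ by powers of $R$'') is asserted without a mechanism; positivity alone gives no growth control without some Harnack inequality.

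\textbf{The base case is circular but repairable.} Theorem~\ref{thm:sharp-poisson} cannot serve as the base case, because the paper derives it from this very theorem. For $k=2$ you can fix this with what you already have. Apply Theorem~\ref{thm:local-CY} with $K=0$, centered at $x$, and let $R\to\infty$; this gives $\Delta_gu\ge|\nabla u|_g^2/(8nu)\ge0$. Hence $\Delta_gu\equiv c\ge0$ by Yau's theorem, and $|\nabla\sqrt u|_g\le\sqrt{2nc}$ yields (2) and (3). This does not extend to $k\ge3$, where $\Delta_g^{k-2}u$ is not known to be bounded below.

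\textbf{The missing idea} is a parabolic substitute for spherical means. The function $\mathcal U(x,t)=\sum_{j=0}^{k-1}\frac{t^j}{j!}\Delta_g^ju(x)$ solves $\partial_t\mathcal U=\Delta_g\mathcal U$ because $\Delta_g^ku=0$. It satisfies $\mathcal U(\cdot,0)=u>0$ and is a polynomial in $t$. Without growth assumptions one cannot identify it with the heat semigroup applied to $u$. So the paper proves $\mathcal U>0$ for all $t\ge0$ directly, in Proposition~\ref{thm:caloric-positivity}:
\begin{enumerate}
\item Ekeland's principle selects a point whose first zero time $T_i$ is almost minimal in a Lipschitz sense, so $\mathcal U$ stays positive on a large rescaled parabolic cylinder below it.
\item The parabolic Harnack inequality bounds the values at $m+1$ fixed early times by the value at a time $b_i\to1$.
\item Lagrange interpolation of the degree-$\le m$ time polynomial then gives a derivative bound independent of $i$. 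This contradicts the drop from $1$ at time $b_i$ to $0$ at time $1$.
\end{enumerate}

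Once $\mathcal U>0$ is known, the three assertions follow:
\begin{enumerate}
\item Assertion (1) is the nonnegativity of the leading time coefficient, plus Yau's theorem.
\item Assertion (2) comes from writing $\Delta_g^ju(x)=T^{-j}\sum_\ell A_{j\ell}\,\mathcal U(x,\theta_\ell T)$. Each term is compared with $\mathcal U(p,\Theta T)\le C(1+T^{k-1})$ by a parabolic Harnack chain at scale $T=1+d_g(x,p)^2$.
\item Assertion (3) comes from the Li--Yau estimate for $\mathcal U$ and inversion of a Vandermonde system. Your elliptic gradient-estimate route would also work once (2) is known.
\end{enumerate}
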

\begin{remark}
   Combining with the work of Wang--Zhu \cite{WZ} and Bravo--Cortissoz \cite{BC1, BC2}, the linear space spanned by positive polyharmonic functions of fixed order on manifolds with nonngeative Ricci curvature  has finite dimension.
\end{remark}

We conclude the introduction by briefly outlining the strategy and structure of the paper. Section \ref{sec2} recalls the parabolic Harnack inequality on manifolds satisfying the doubling property and the Neumann-Poincar\'e inequality, which allows us to establish the crucial Lemma \ref{lem:harnack-package} and Proposition \ref{thm:caloric-positivity}. Building upon these, we prove Theorem \ref{thm:main} in Section \ref{sec:polyhar}. Subsequently, we derive a Cheng–Yau type estimate for positive biharmonic functions and prove Theorem \ref{thm:local-CY}. Finally, applying Theorems \ref{thm:main} and \ref{thm:local-CY}, we obtain the sharp estimate \eqref{eq:sharp-v} and thereby complete the proof of Theorem \ref{thm:sharp-poisson}.

\subsection*{Disclosure on AI assistance.} The authors used AI-assisted tools, principally ChatGPT. The authors wrote and verified all theorem statements, proofs, and they take full responsibility for the contents of the paper.

\section{Preliminaries}\label{sec2}
In this section, we present several useful results that will be employed in the proofs of our main theorems. Here and below, $(M, g)$ is a complete Riemannian manifold. 

\subsection{Doubling property and Neumann-Poincar\'e inequality}\label{sec:pre}
We recall the definition of doubling property and uniform Neumann--Poincar\'e inequality.

\begin{definition}\label{def:VD}
We say that $M$ satisfies the \emph{doubling property} if there is a positive constant $C_D<\infty$ such that
\begin{equation}\label{eq:VD}
  \Vol(B_{2r}(x))\leq C_D\,\Vol(B_r(x))
  \qquad\text{for all }x\in M\text{ and }r>0.
\end{equation}
\end{definition}

\begin{definition}\label{def:PI}
We say that $M$ satisfies a \emph{uniform Neumann--Poincar\'e inequality} if there is a  positive constant $C_P<\infty$ such that
\begin{equation}\label{eq:PI}
  \int_{B_r(x)}|f-f_{B_r(x)}|^2\dd V
  \leq C_P r^2\int_{B_r(x)}|\nabla f|^2\dd V
\end{equation}
for all $x\in M$, $r>0$ and $f\in W^{1,2}_{\mathrm{loc}}(M)$, where
$$
  f_{B_r(x)}
  :=\frac{1}{\Vol(B_r(x))}\int_{B_r(x)}f\dd V.
$$
\end{definition}
We note that if $M$ has nonnegative Ricci curvature, then $M$ satisfies doubling property and uniform Neumann--Poincar\'e inequality.

\subsection{\bf Harnack inequality.}The equivalence between the \eqref{eq:VD}-\eqref{eq:PI} and scale invariant
parabolic Harnack inequalities is due to Grigor'yan \cite{Grigoryan1992} and Saloff-Coste \cite{SaloffCoste1992IMRN, SaloffCoste1992JDG}. The elliptic Harnack inequalities could be viewed as a direct consequence of the parabolic case by considering stationary heat solutions. For the readers' convenience, we first record
a standard form and then derive the precise consequences used in the
proof.

\begin{theorem}[Theorem 3.1 in \cite{SaloffCoste1992IMRN}]\label{thm:standard-PHI}
Let $(M,g)$ be a complete Riemannian manifold satisfying \eqref{eq:VD} and \eqref{eq:PI}, and fix $x_0\in M$. There are constants
$C_H$ and $C_E$ depending only on $C_D$ and $C_P$ with the following property:
\begin{enumerate}
\item Let $R>0$ and  $w>0$ be a
classical solution of 
$$\partial_t w=\Delta_g w, \qquad \text{ on } B_{2R}(x_0)\times (s,s+4R^2].
$$
Then
\begin{equation*}\label{eq:standard-PHI}
  \sup_{B_R(x_0)\times(s+R^2,s+2R^2)}w
  \leq
  C_H\inf_{B_R(x_0)\times(s+3R^2,s+4R^2)}w.
\end{equation*}
\item  If $h\geq0$ is harmonic in $B_{2R}(x_0)$, then
  \begin{equation*}\label{eq:EHI}
    \sup_{B_R(x_0)}h\leq C_E\inf_{B_R(x_0)}h.
  \end{equation*}
\end{enumerate}
\end{theorem}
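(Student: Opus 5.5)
This statement is quoted from Saloff-Coste (Theorem 3.1 in \cite{SaloffCoste1992IMRN}), so the plan is to reduce to the standard literature rather than reprove it from scratch.

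\emph{Part (1).} Here I will invoke the Grigor'yan--Saloff-Coste equivalence directly. Under \eqref{eq:VD} and \eqref{eq:PI}, the Moser iteration scheme goes through on balls. It rests on two inputs:
\begin{enumerate}
\item a local Sobolev inequality on each ball $B_r(x)$, with constant of order $r^2/\Vol(B_r(x))^{2/\nu}$, which follows from doubling together with Poincar\'e;
\item mean-value bounds, i.e.\ $L^2\to L^\infty$ bounds for subsolutions and for negative powers of positive supersolutions.
\end{enumerate}
Next, Bombieri--Giusti or Moser's abstract lemma links $\sup w$ and $\inf w$ through $\log w$. The weak $L^1$ estimate for $\log w$ comes from testing the equation against $\phi^2/w$ and then applying the weighted Poincar\'e inequality in space.

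The time windows $(s+R^2,s+2R^2)$ and $(s+3R^2,s+4R^2)$ are separated by a gap of length $R^2$, so a finite chaining argument over cylinders of comparable size yields the stated shape. The constant $C_H$ depends only on $C_D$ and $C_P$ because every step is scale invariant.

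\emph{Part (2).} Take a harmonic $h\ge0$ in $B_{2R}(x_0)$. For $\epsilon>0$, set $w=h+\epsilon$, regarded as time-independent, so $w>0$ and $\partial_t w=0=\Delta_g w$ on $B_{2R}(x_0)\times(s,s+4R^2]$. Part (1) gives
$$\sup_{B_R}h+\epsilon\le C_H\bigl(\inf_{B_R}h+\epsilon\bigr).$$
Letting $\epsilon\to0$ gives the claim with $C_E=C_H$.

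\emph{Main obstacle.} The substantive difficulty is entirely in part (1), namely the passage from the $\log w$ estimate to the two-sided Harnack bound, that is, the crossover lemma. Since the paper explicitly cites this result, I would not redo it. I would simply reference \cite{Grigoryan1992,SaloffCoste1992IMRN,SaloffCoste1992JDG} for (1) and give only the short reduction above for (2).
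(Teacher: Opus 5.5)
Your proposal is correct and matches the paper: the paper does not reprove part (1) but cites Saloff-Coste and Grigor'yan for it. It obtains part (2) exactly as you do, by viewing a nonnegative harmonic function as a stationary solution of the heat equation; your $\epsilon$-shift $w=h+\epsilon$ to secure strict positivity, which yields $C_E=C_H$, is a careful rendering of that remark.
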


Using the above theorem as a foundation, we derive the following lemma suited to our present context.
\begin{lemma}\label{lem:harnack-package}
Under the same assumption as in Theorem \ref{thm:standard-PHI}. The following estimates hold.
\begin{enumerate}

  \item Fix $0<a<1/4$.  There are constants
  $R_a$ and $C_a$ depending only on $a$, $C_D$ and $C_P$ such that, for every
  $b\in[1/2,1)$, if $w>0$ solves 
  $$\partial_tw=\Delta_g w \qquad \text{ on } B_{R_a}(x_0)\times(0,b],$$
  then
  \begin{equation*}\label{eq:one-point-PHI}
    w(x_0,a)\leq C_a w(x_0,b).
  \end{equation*}

  \item Fix two positive constants $0<\theta<\Theta$.  There is a constant
  $C$ depending only on $\theta$, $\Theta$, $C_D$ and $C_P$ such that every positive global heat solution satisfies
  \begin{equation*}\label{eq:two-point-PHI}
    w(x,\theta T)\leq Cw(y,\Theta T)
  \end{equation*}
  whenever $T>0$ and $d_g(x,y)^2\leq T$.
\end{enumerate}
\end{lemma}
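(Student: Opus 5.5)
Both parts of the lemma come from the standard parabolic Harnack inequality of Theorem \ref{thm:standard-PHI}(1). We choose the spatial radius and the starting time $s$ so that the two times being compared fall into the ``early'' and ``late'' windows. When one step cannot reach from $a$ to $b$, we chain finitely many such comparisons; the number of steps is bounded in terms of $a$ (respectively $\theta,\Theta$) alone.

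\emph{Part (1).} Set $r=\sqrt{a}/2$, so that $4r^2=a<1/4$. We apply Theorem \ref{thm:standard-PHI}(1) with radius $r$ at the point $x_0$ and with shifts $s_j$. For any admissible $s$ (that is, $s\ge0$ and $s+4r^2\le b$), it gives
$$w(x_0,t)\le C_H\, w(x_0,t') \quad\text{for all } t\in(s+r^2,s+2r^2),\ t'\in(s+3r^2,s+4r^2).$$
We need $B_{2r}(x_0)\subset B_{R_a}(x_0)$, so we take $R_a=2r=\sqrt a$. Starting from $t_0=a$, we take $s_0=a-\tfrac32 r^2\ge0$; this is legitimate since $a=4r^2$. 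This moves time forward to any point of $(s_0+3r^2,s_0+4r^2)$, a gain of about $2r^2$ per step. We iterate, choosing the final shift $s_N$ so that $b$ lies in its late window. This requires $s_N+3r^2<b\le s_N+4r^2$, and since the late windows of consecutive steps can be arranged to overlap, this is arranged by adjusting the last step. The constraint $s_N+4r^2\le b$ is exactly why $b$ is allowed at the right endpoint; an open window could be handled by continuity of $w$, or by choosing $s_N=b-4r^2+\epsilon$. Because $b<1$, the number of steps satisfies $N\lesssim (1-a)/(2r^2)\lesssim 1/a$, which is independent of $b$. Hence $C_a=C_H^{N+1}$.

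\emph{Part (2).} We use parabolic scaling. For global solutions every ball is available, so we apply Theorem \ref{thm:standard-PHI}(1) centered at $x$ with radius $\rho=c\sqrt T$ for a small $c=c(\theta,\Theta)$. We first chain in time at the point $x$, as in Part (1), from $\theta T$ up to a time close to $\Theta T$, and only then move in space. The spatial step uses the fact that $y\in B_{\rho'}(x)$ when $\rho'\ge \sqrt T$. For this last step we use one application with radius $\rho'=\sqrt T$ and starting time $s=\Theta T-4\rho'^2$. This requires $\Theta T\ge 4T$, which may fail, so instead we take radius $\rho'=\sqrt{T}$ but cover the distance $d(x,y)\le\sqrt T$ by a chain of $m$ points $x=z_0,\dots,z_m=y$ along a minimizing geodesic, spaced $\sqrt T/m$ apart. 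At each point we use a ball of radius $\rho=2\sqrt T/m$, so that $z_{i+1}\in B_\rho(z_i)$, together with a time step of size about $2\rho^2$. With $m$ chosen large in terms of $\Theta-\theta$ and $\theta$, the total time used, $m\cdot 4\rho^2=16T/m$, fits inside the interval $(\theta T,\Theta T)$, and the initial shift $s\ge0$ is guaranteed by $\theta T\ge \tfrac32\rho^2$. Any leftover time is absorbed by extra time-only steps at $y$. The total number of steps depends only on $\theta$ and $\Theta$, so $C=C_H^{N}$.

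The main obstacle is the bookkeeping of the windows. Each step must sit inside the domain ($s\ge0$ and the top time at most $b$, or $\Theta T$), and the endpoint times must land in the open windows. Both are handled by small adjustments of the shifts, since consecutive windows overlap.
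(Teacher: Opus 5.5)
Your proposal is correct and follows the paper's argument. Both parts are finite chains of Theorem~\ref{thm:standard-PHI}(1), with the number of links controlled by $a$ (resp.\ $\theta,\Theta$), and part (2) works by subdividing a minimizing geodesic together with the time interval. The paper simply rescales first ($g_b=b^{-1}g$, $g_T=T^{-1}g$) instead of working at scales $\sqrt a$ and $\sqrt T$ as you do. Of your two fixes for the open late window at $t=b$, only the continuity argument works (it is also what the paper uses). Choosing $s_N=b-4r^2+\epsilon$ makes the cylinder $(s_N,s_N+4r^2]$ extend past $t=b$, outside the domain of $w$.
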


\begin{proof}
We shall repeatedly use the following pointwise consequence of
Theorem~\ref{thm:standard-PHI}. If $u>0$ solves the heat equation on
$$
B_{2\rho}(z)\times(\sigma,\sigma+4\rho^2],
$$
and if
$$
z_-,z_+\in B_\rho(z),
\qquad
t_-\in(\sigma+\rho^2,\sigma+2\rho^2),
\qquad
t_+\in(\sigma+3\rho^2,\sigma+4\rho^2),
$$
then
\begin{equation}\label{eq:pointwise-parabolic-Harnack}
u(z_-,t_-)\le C_Hu(z_+,t_+).
\end{equation}

For part~\textup{(1)}, set
$$
g_b=b^{-1}g,
\qquad
W(z,\tau)=w(z,b\tau),
\qquad
\alpha=\frac{a}{b}.
$$
Then
$$
\partial_\tau W=\Delta_{g_b}W,
\qquad
\alpha\in[a,2a]\subset(0,\tfrac12).
$$
The constants of doubling property and uniform Neumann--Poincar\'e inequality are unchanged by this rescaling.
Take $R_a=1$, we note the function $W$ is defined on
$B^{g_b}_1(x_0)\times(0,1].$
Since $\alpha$ ranges in the fixed compact interval $[a,2a]$,
a finite parabolic Harnack chain, with a number of links depending
only on $a$, joins $(x_0,\alpha)$ to $(x_0,\frac12)$ inside
$B^{g_b}_1(x_0)\times(0,1]$. Hence
$$
W(x_0,\alpha)\le C_a'W(x_0,\tfrac12),
$$
where $C_a'$ depends only on $a$, $C_D$, and $C_P$.
A further fixed Harnack comparison, followed by continuity at
$\tau=1$, gives
$$
W(x_0,\tfrac12)\le C_HW(x_0,1).
$$
Consequently,
$$
w(x_0,a)=W(x_0,\alpha)
\le C_aW(x_0,1)
=C_aw(x_0,b),
$$
where $C_a$ depends only on $a$, $C_D$, and $C_P$.

For part~\textup{(2)}, set
$$
g_T=T^{-1}g,
\qquad
W(z,\tau)=w(z,T\tau).
$$
Then
$$
\partial_\tau W=\Delta_{g_T}W,
\qquad
d_{g_T}(x,y)\le1.
$$
Join $x$ to $y$ by a minimizing $g_T$-geodesic. By subdividing this
geodesic and the time interval $[\theta,\Theta]$ into sufficiently
many equal pieces, with the number of pieces depending only on
$\theta$ and $\Theta$, one obtains a finite parabolic Harnack chain
joining $(x,\theta)$ to $(y,\Theta)$. Since $w$ is a global positive
heat solution, every cylinder in the chain lies in its domain.
Iterating the Harnack inequality therefore yields
$$
W(x,\theta)\le CW(y,\Theta),
$$
where $C$ depends only on $\theta$, $\Theta$, $C_D$, and $C_P$.
Scaling back gives $w(x,\theta T)\le Cw(y,\Theta T)$.
\end{proof}

The next result is the core of the proof of Theorem \ref{thm:main}.  It uses  the parabolic Harnack inequality, the Ekeland's variational principle and the method developed  in \cite{coldmin2021,linzh2019}.

\begin{proposition}\label{thm:caloric-positivity}
Under the same assumption as in Theorem \ref{thm:standard-PHI},  let
$$
  U:M\times[0,\infty)\longrightarrow\R
$$
be a classical solution of
\begin{equation*}\label{eq:heat-U}
  \partial_tU=\Delta_g U.
\end{equation*}
Suppose that, for every $x\in M$, the map $t\mapsto U(x,t)$ is a polynomial of degree at most $m$, with the same $m$ for all $x$.  If
\begin{equation*}\label{eq:initial-positive}
  U(x,0)>0
  \qquad\text{for every }x\in M,
\end{equation*}
then
\begin{equation}\label{eq:caloric-positive-conclusion}
  U(x,t)>0
  \qquad\text{for every }x\in M\text{ and }t\geq0.
\end{equation}
\end{proposition}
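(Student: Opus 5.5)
The plan is to exploit the polynomial structure in $t$ together with the local parabolic Harnack inequality of Lemma~\ref{lem:harnack-package}(1), in a continuity (``how long does positivity last'') argument.

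\textbf{Setup.} Writing $U(x,t)=\sum_{j=0}^m t^j u_j(x)$, the heat equation gives $u_{j+1}=\Delta_g u_j/(j+1)$. Thus $U(x,t)=\sum_{j\le m}\frac{t^j}{j!}\Delta_g^j u_0(x)$ with $\Delta_g^{m+1}u_0=0$, and for each $x$ the map $p_x(t):=U(x,t)$ is a polynomial of degree at most $m$. For each $x$ define
$$
\tau(x):=\sup\Bigl\{t>0:\ U>0 \text{ on } \overline{B_{\sqrt t}(x)}\times[0,t]\Bigr\}\in(0,\infty].
$$
By continuity and compactness of small balls, $\tau(x)>0$. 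The goal is to show $\tau\equiv\infty$, which gives \eqref{eq:caloric-positive-conclusion}.

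\textbf{Key local step (Harnack + Remez).} Suppose $U>0$ on $B_{\sqrt T}(x)\times(0,T]$. Rescale by $g_T=T^{-1}g$ and apply Lemma~\ref{lem:harnack-package}(1), for a fixed small $a$ and all $b\in[1/2,1)$; since the lemma holds for any $x_0$ and any $0<a<1/4$, I would also apply it with the centre at points of a smaller ball $B_{c\sqrt T}(x)$ and with a couple of values of $a$. This should yield, for $y\in B_{c\sqrt T}(x)$,
$$
p_y(t)\ge C^{-1}p_y(s)\qquad\text{for all } s,t\in[aT,T),
$$
with $C$ depending only on $C_D$ and $C_P$. So $p_y$ is comparable to its own minimum on $[aT,T]$. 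A degree-$m$ polynomial obeys the Markov/Remez bound $|p'|\le C_m\max_{[aT,T]}|p|/T$ on that interval, and this bound propagates to a slightly larger interval. Hence $p_y'$ is bounded there by $C'\min_{[aT,T]}p_y/T$, and $p_y$ stays positive on $[T,(1+\delta)T]$, where $\delta=\delta(m,C_D,C_P)>0$ is independent of $T$ and $y$. Applied with $T\uparrow\tau(x)$ when $\tau(x)<\infty$, this gives positivity on $B_{c\sqrt{\tau(x)}}(x)\times[0,(1+\delta)\tau(x)]$. The time gain is uniform, but the ball shrinks, so this alone does not contradict the maximality of $\tau(x)$.

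\textbf{Global step via Ekeland.} To recover the spatial radius I will choose a good base point, using Ekeland's variational principle on the lower semicontinuous function $f=-\log\tau$; the lower semicontinuity is to be checked from the definition. If $f$ is unbounded below, I work in the region where it is finite, or note that $\tau(y)$ large at nearby points already helps. Ekeland should produce a point $x$ with $\tau(x)<\infty$ such that
$$
\tau(y)\ge(1-\varepsilon)\tau(x)\qquad\text{for all } y\in B_{2\sqrt{\tau(x)}}(x),
$$
for a small $\varepsilon$ chosen relative to $\delta$. Applying the local step at every such $y$ gives positivity on $B_{c\sqrt{\tau(y)}}(y)\times[0,(1+\delta)\tau(y)]$. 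These cylinders cover $\overline{B_{\sqrt{(1+\delta/2)\tau(x)}}(x)}\times[0,(1+\delta/2)\tau(x)]$, since $(1+\delta)(1-\varepsilon)>1+\delta/2$. This contradicts the definition of $\tau(x)$, so $\tau\equiv\infty$.

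\textbf{Main obstacle.} I expect the hardest part to be making the Ekeland step rigorous: ensuring $\tau$ (or a suitable modification) is lower semicontinuous and bounded so that the principle applies, with the perturbation term of order $d_g(x,y)/\sqrt{\tau(x)}$ so that it is scale-compatible. A secondary difficulty is the uniform Remez-type extension, which must use only positivity on $[0,T]$ and not any a priori global lower bound on $U(\cdot,0)$.
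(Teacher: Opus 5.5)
\textbf{There is a genuine gap, in the Ekeland step.} Your overall architecture matches the paper's: forward Harnack at fixed early times, interpolation of the degree-$m$ time polynomial, and Ekeland to locate a point where positivity fails essentially earliest at its own parabolic scale. Your covering argument would close the proof once you have a point $x$ with $\tau(x)<\infty$ and $\tau(y)\ge(1-\varepsilon)\tau(x)$ for all $y\in B_{2\sqrt{\tau(x)}}(x)$. But the Ekeland step, which you yourself flag as the crux, does not produce this as you set it up.

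Ekeland's principle yields approximate \emph{minimizers} $x$ of the function $f$ it is applied to, with $f(y)\ge f(x)-\mu\,d_g(x,y)$. For $f=-\log\tau$ this reads $\tau(y)\le\tau(x)e^{\mu d_g(x,y)}$. That is an approximate maximizer of $\tau$ and an \emph{upper} bound on $\tau(y)$, the opposite of what the covering needs. It is also why you ran into $f$ possibly being unbounded below: $-\log\tau$ is unbounded below exactly when $\tau$ is unbounded above, which is irrelevant here.

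Reversing the sign to $\log\tau$ does not fully repair it. Since $U(\cdot,0)$ is only pointwise positive, you cannot exclude $\inf_M\tau=0$ a priori, and then $\log\tau$ is unbounded below and Ekeland does not apply. Moreover, a linear perturbation of $\log\tau$ gives a relative error of order $\mu\sqrt{\tau(x)}$ on $B_{2\sqrt{\tau(x)}}(x)$, which is not scale-invariant.

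\textbf{The missing idea is to apply Ekeland to $F=\sqrt{\tau}$, as the paper does.}
\begin{itemize}
\item $F$ is bounded below by $0$.
\item $F$ is lower semicontinuous. With your cylinder definition one checks directly that $\sqrt{\tau(y)}\ge\sqrt{\tau(x)}-d_g(x,y)$, so either $\tau\equiv\infty$ or $\sqrt\tau$ is $1$-Lipschitz; this settles your semicontinuity worry.
\item Since $\sqrt\tau$ scales like a distance, the linear perturbation is automatically scale-compatible. Ekeland with $\varepsilon=i^{-2}$, $\lambda=1/i$ gives $x$ with $\sqrt{\tau(y)}\ge\sqrt{\tau(x)}-\tfrac1i d_g(x,y)\ge(1-\tfrac2i)\sqrt{\tau(x)}$ on $B_{2\sqrt{\tau(x)}}(x)$. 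For $i$ large depending on $\delta$, this is exactly the inequality your covering needs.
\end{itemize}

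\textbf{A smaller correction in the local step.} Lemma~\ref{lem:harnack-package}(1) only bounds earlier values by later ones, so two-sided comparability on $[aT,T)$ does not follow from it directly. Also, a couple of values of $a$ cannot control a polynomial of degree $m$. What suffices is the paper's device:
\begin{itemize}
\item take $m+1$ nodes $0<a_0<\cdots<a_m<\tfrac14$;
\item the lemma gives $p_y(a_\ell T)\le C_\ell\,p_y(bT)$ for all $b\in[\tfrac12,1)$;
\item Lagrange interpolation then gives $T\sup_{[0,2T]}|p_y'|\le C\inf_{[T/2,T)}p_y$, which yields your uniform $\delta$.
\end{itemize}

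With these fixes your proof is a close variant of the paper's. Your uniform time gain plus covering replaces the paper's direct contradiction $1=P_i(b_i)-P_i(1)\le C(1-b_i)\to0$ at the Ekeland point. Your cylinder-type $\tau$ replaces the pointwise first zero time; in the paper, Ekeland applied to $\sqrt{\tau}$ itself supplies the large positivity cylinder $B^{g_i}_{\sqrt i}(x_i)\times[0,\sigma_i)$.
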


\begin{proof}
The case $m=0$ is immediate.  Suppose $m\geq1$ and argue by contradiction. If there is $x_\star\in M$ and $ t_\star>0$ such that $U(x_\star, t_\star)\le 0$, then continuity in time gives a zero at a positive time.  For each $x\in M$, define the first zero time
\begin{equation*}\label{eq:tau-def}
  \tau(x)=\inf\{t>0:U(x,t)=0\}\in(0,\infty],
\end{equation*}
with the convention $\tau(x)=\infty$ if no zero occurs.  Since $U(x,0)>0$, every $\tau(x)$ is positive.  If $\tau(x)<\infty$, then
\begin{equation*}\label{eq:first-zero-properties}
  U(x,t)>0\quad\hbox{for $0\leq t<\tau(x)$},
  \qquad
  U(x,\tau(x))=0.
\end{equation*}
The function $\tau$ is lower semicontinuous. Set
$$
  F(x)=\sqrt{\tau(x)}\in(0,\infty].
$$
By the contradiction assumption it is finite somewhere.  Let
$$
  \alpha=\inf_M F\in[0,\infty).
$$
For every sufficiently large integer $i$, choose $z_i\in M$ with
$$
  F(z_i)\leq \alpha+i^{-2}.
$$
Applying the Ekeland's variational principle \cite{Ekeland1974} to $F$ on the
complete metric space $M$, 
it yields a point $x_i\in M$ such that $F(x_i)\leq F(z_i)<\infty$, $d_g(x_i,z_i)<\frac{1}{i}$, and
\begin{equation}\label{eq:Ekeland-main}
  F(y)\geq F(x_i)-\frac1i d_g(x_i,y)
  \qquad\text{for every }y\in M.
\end{equation}
Write
$$
  T_i=\tau(x_i),
  \qquad
  g_i=T_i^{-1}g,
  \qquad
  V_i(y,s)=U(y,T_i s).
$$
Then $V_i$ solves the heat equation with respect to $g_i$, and constants of doubling property and uniform Neumann-Poincar\'e inequality are unchanged by this rescaling.  Moreover,
\begin{equation*}\label{eq:center-zero}
  V_i(x_i,s)>0\quad(0\leq s<1),
  \qquad
  V_i(x_i,1)=0.
\end{equation*}

Let
$$
  \sigma_i=\left(1-\frac1{\sqrt i}\right)^2.
$$
If $d_{g_i}(x_i,y)\leq \sqrt{i}$, then $d_g(x_i,y)\leq\sqrt{T_i}\sqrt i$, and \eqref{eq:Ekeland-main} gives
$$
  \sqrt{\tau(y)}
  \geq\sqrt{T_i}\left(1-\frac1{\sqrt i}\right)=\sqrt{T_i}\sqrt{\sigma_i}.
$$
Thus
\begin{equation}\label{eq:large-positive-cylinder}
  V_i(y,s)>0
  \quad\text{on}\quad
  B_{\sqrt{i}}^{g_i}(x_i)\times[0,\sigma_i).
\end{equation}

Choose $m+1$ distinct numbers
\begin{equation*}\label{eq:fixed-nodes}
  0<a_0<a_1<\cdots<a_m<\frac14.
\end{equation*}
For large $i$, define
$$
  b_i=1-\frac4{\sqrt i}.
$$
Then $b_i\in[1/2,1)$, $b_i<\sigma_i$, and $b_i\to1$ as $i\to\infty$.  The fixed radius required in
Lemma \ref{lem:harnack-package}(1) is contained in
$B_{\sqrt{i}}^{g_i}(x_i)$ for all sufficiently large $i$.  Hence that lemma applies
to \eqref{eq:large-positive-cylinder} and gives constants
$C_0,\ldots,C_m$, independent of $i$, such that
\begin{equation*}\label{eq:node-Harnack}
  0<V_i(x_i,a_\ell)
  \leq C_\ell V_i(x_i,b_i),
  \qquad \ell=0,\ldots,m.
\end{equation*}

Normalize the time polynomial by
\begin{equation*}\label{eq:normalized-pi}
  P_i(s):=\frac{V_i(x_i,s)}{V_i(x_i,b_i)}.
\end{equation*}
Then $P_i$ has degree at most $m$ and satisfies
\begin{equation*}\label{eq:Pi-data}
  0<P_i(a_\ell)\leq C_\ell,
  \qquad
  P_i(b_i)=1,
  \qquad
  P_i(1)=0.
\end{equation*}
Note $P_i$ is determined by its Lagrange interpolating polynomial which passes through the $m+1$ different points $(a_\ell, P(a_\ell))$, $\ell=0, 1, \cdots, m$, i.e. 
\begin{equation*}
 P_i(s)=\sum_{\ell=0}^mP_i(a_\ell)L_\ell(s), \qquad \text{ where }L_\ell(s)=\prod_{j\neq \ell, 0\le j\le m}\frac{s-a_j}{a_\ell-a_j}.   
\end{equation*}
Then for every large $i$, we have $$\sup_{0\le s\le 1}|P_i'(s)|\leq \sum_{\ell=0}^mC_\ell\sup_{0\le s\le 1}|L'_\ell(s)|=:C.$$
Note that the constant $C$ depends only on $m$, $a_\ell$ and $C_\ell$, and is independent of $i$. Therefore
$$
  1=|P_i(b_i)-P_i(1)|
  \leq C(1-b_i)\to0,
$$
a contradiction.  Thus no first zero exists and \eqref{eq:caloric-positive-conclusion} follows.
\end{proof}

\section{Proof of Theorem \ref{thm:main}}\label{sec:polyhar}

We now apply Proposition \ref{thm:caloric-positivity} to the finite Taylor polynomial generated by a polyharmonic function, which leads to the proof of Theorem \ref{thm:main}.

\begin{proof}[Proof of Theorem \ref{thm:main}]
Define
\begin{equation}\label{eq:finite-caloric-poly}
  \mathcal U(x,t)
  =\sum_{j=0}^{k-1}\frac{t^j}{j!}\Delta_g^ju(x).
\end{equation}
Since $\Delta_g^ku=0$, we have 
$$
  \partial_t\mathcal U
  =\sum_{j=0}^{k-2}\frac{t^j}{j!}\Delta_g^{j+1}u
  =\Delta_g\mathcal U.
$$
Also $\mathcal U(x,0)=u(x)>0$. Proposition \ref{thm:caloric-positivity} yields
\begin{equation}\label{eq:U-positive-all-time}
  \mathcal U(x,t)>0
  \qquad\hbox{for $x\in M,\ t\geq0$}.
\end{equation}

For fixed $x$, the function $t\mapsto\mathcal U(x,t)$ is a polynomial which is positive on the whole half-line.  Its coefficient of $t^{k-1}$ cannot be negative.  Hence
\begin{equation*}\label{eq:top-nonnegative}
  \Delta_g^{k-1}u\geq0.
\end{equation*}
Since $\Delta_g(\Delta_g^{k-1}u)=0$, Yau's theorem (Theorem \ref{Yau's theorem}) implies
$$
  \Delta_g^{k-1}u\equiv c_u\geq0.
$$
This is \eqref{constpoly}.

It remains to estimate all iterated Laplacians.  Choose distinct numbers
$$
  0<\theta_0<\theta_1<\cdots<\theta_{k-1}<\Theta.
$$
For fixed $x$ and $T>0$, set
$$
  Q_{x,T}(s)=\mathcal U(x,sT).
$$
The polynomial $Q_{x,T}$ has degree at most $k-1$, and
$$
  Q_{x,T}^{(j)}(0):=\frac{d^j}{ds^j}\bigg|_{s=0}Q_{x, T}=T^j\Delta_g^ju(x).
$$
Differentiating the Lagrange interpolation polynomial with respect to the nodes $\theta_0,\ldots,\theta_{k-1}$ at $s=0$, it gives constants $A_{j\ell}$, which depend only on the chosen nodes, and $j$ and $k$, and are independent of $x$ and $T$, such that
\begin{equation}\label{eq:lagrange-derivatives}
  \Delta_g^ju(x)
  =T^{-j}\sum_{\ell=0}^{k-1}A_{j\ell}\mathcal U(x,\theta_\ell T),
  \qquad 0\leq j\leq k-1.
\end{equation}

Fix the base point $p$ and write $r=d_g(p,x)$.  Take $
  T=1+r^2.$
Since $r^2\leq T$, Lemma~\ref{lem:harnack-package} (2) and \eqref{eq:U-positive-all-time} imply
\begin{equation*}\label{eq:U-two-point}
  \mathcal U(x,\theta_\ell T)
  \leq C\mathcal U(p,\Theta T)
\end{equation*}
for every $\ell$, with $C$ depending only on all fixed times $\theta_\ell$ and $\Theta$, and constants of the doubling property and uniform Neumann--Poincar\'e inequality.  At the fixed point $p$,
\begin{equation*}\label{eq:base-poly-growth}
  0<\mathcal U(p,\Theta T)
  \leq\sum_{j=0}^{k-1}\frac{(\Theta T)^j}{j!}|\Delta_g^ju(p)|
  \leq C_{u,p,k}(1+T^{k-1}).
\end{equation*}
Substitution into \eqref{eq:lagrange-derivatives} gives
\begin{equation}\label{eq:delta-j-v-growth}
  |\Delta_g^ju(x)|
  \leq C_{u,p,j}\,T^{k-1-j}
  \leq C_{u,p,j}(1+r)^{2(k-1-j)}.
\end{equation}
This is exactly the desired estimate \eqref{growth} for $u$. 

Finally, we derive the proof of \eqref{gradgrowth}, and the non-negative Ricci curvature assumption is needed to apply the Li-Yau gradient estimate. By \eqref{eq:U-positive-all-time} and applying Li-Yau gradient estimate \cite{LiYau} to $\mathcal U$, we have,
\begin{equation*}
    \frac{|\nabla \mathcal U|_g^2}{\mathcal U^2}-\frac{\partial_t\mathcal U}{\mathcal U}\le \frac{C_n}{t} \qquad \text{ for every } t>0.
\end{equation*}
Then, by \eqref{eq:finite-caloric-poly} and \eqref{eq:delta-j-v-growth}, for fixed $\theta$, we have $$|\nabla\mathcal{U}(x, \theta T)|_g^2\le \frac{C_n}{\theta T}|\mathcal{U}(x, \theta T)|^2+|\mathcal{U}(x, \theta T)||\partial_t\mathcal U(x, \theta T)|\le C_{\theta, u, p, n}T^{2k-3},$$ and hence 
\begin{equation}\label{gradcaro}
    |\nabla \mathcal U(x, \theta T)|_g\le C_{\theta, u, p, n}T^{k-\frac{3}{2}}.
\end{equation}
Recall $$\nabla \mathcal U(x, \theta_\ell T)=\sum_{j=0}^{k-1}\theta_\ell^j B_j(x, T)\qquad \text{ where } B_j(x, T)=\frac{T^j}{j!}\nabla\Delta_g^ju(x).$$
Note the scalar coefficient matrix $(\theta_\ell^j)_{0\le\ell, j \le k-1}$ is a fixed invertible Vandermonde matrix. Consequently, there are constants $b_{j\ell}$ depending only on the chosen nodes such that 
\begin{equation*}\label{gardB}
    B_j(x, T)=\sum_{\ell=0}^{k-1}b_{j\ell}\nabla \mathcal U(x, \theta_\ell T).
\end{equation*}
Together with \eqref{gradcaro} and $T=1+r^2$, we obtain the desired estimate \eqref{gradgrowth}.
\end{proof}

\section{Proof of Theorem~\ref{thm:local-CY}}\label{sec:Cheng-Yau}
In this section, we provide the proof of Theorem~\ref{thm:local-CY}, which establishes a Cheng--Yau type estimate for positive biharmonic functions on complete manifolds with Ricci curvature bounded from below.

\begin{proof}[Proof of Theorem~\ref{thm:local-CY}] The proof is based on the Bochner technique, and it is divided into the following two steps.

{\sl Step 1. We derive a differential inequality with a coercive term.} Set $f=\log u$ and $q=u^{-1}\Delta_g u$. Then, a direct calculation yields
$$
    \nabla f=\frac{\nabla u}{u}
    \quad\hbox{and}\quad
    \Delta_g f=\frac{\Delta_g u}{u}-|\nabla f|_g^2
    =q-|\nabla f|_g^2.
$$
Using the Bochner formula for $f$, we obtain
\begin{align*}
    \frac12\Delta_g|\nabla f|^2
    &=
    |\nabla^2f|_g^2
    +\langle\nabla f,\nabla\Delta_g f\rangle
    +\Ric_g(\nabla f,\nabla f)\\
    &=
    |\nabla^2f|_g^2
    +\langle\nabla f,\nabla q\rangle
    -\langle\nabla f,\nabla|\nabla f|_g^2\rangle
    +\Ric_g(\nabla f,\nabla f).
\end{align*}
We define the weighted  Laplacian by
$$
    L\cdot=\Delta_g\cdot+2\langle\nabla f,\nabla\cdot\rangle.
$$
It follows that
\begin{equation}
\label{eq:Ls}
    L|\nabla f|^2
    =
    2|\nabla^2f|^2
    +2\Ric_g(\nabla f,\nabla f)
    +2\langle\nabla f,\nabla q\rangle.
\end{equation}
On the other hand, using the biharmonicity of $u$, we obtain
$$
    0=\Delta_g(qu)
    =u\left(\Delta_g q+2\langle\nabla f,\nabla q\rangle+q^2\right),
$$
and so
\begin{equation}
\label{eq:Lq}
    Lq=-q^2.
\end{equation}

Set $\Phi=a|\nabla f|_g^2-q$, where $a>0$ is a constant to be determined. Combining \eqref{eq:Ls} with \eqref{eq:Lq}, we have
\begin{align}
    L\Phi
    &={}
    q^2
    +2a|\nabla^2f|_g^2
    +2a\Ric_g(\nabla f,\nabla f)
    +2a\langle\nabla f,\nabla q\rangle \notag\\
    &={}
    q^2
    +2a|\nabla^2f|_g^2
    +2a\Ric_g(\nabla f,\nabla f)
    +2a^2\langle\nabla f,\nabla|\nabla f|_g^2\rangle
    -2a\langle\nabla f,\nabla\Phi\rangle.       
\label{eq:Lphi-pre}
\end{align}
We note that
$$
    \langle\nabla f,\nabla|\nabla f|_g^2\rangle
    =2\nabla^2f(\nabla f,\nabla f).
$$
Thus, completing the square in \eqref{eq:Lphi-pre} yields
\begin{align*}
    L\Phi+2a\langle\nabla f,\nabla\Phi\rangle
    ={}&
    q^2
    +2a|\nabla^2f+a\,df\otimes df|_g^2
    -2a^3|\nabla f|_g^4\\
    &
    +2a\Ric_g(\nabla f,\nabla f).
\end{align*}
We also note that
$$
    \operatorname{tr}(\nabla^2f+a\,df\otimes df)=\Delta_g f+a|\nabla f|_g^2
    =q-(1-a)|\nabla f|_g^2,
$$
so the curvature assumption and the Cauchy--Schwarz inequality combined yield
\begin{equation}
\label{eq:phi-pre}
\begin{split}
    L\Phi+2a\langle\nabla f,\nabla\Phi\rangle
    \ge{}&
    q^2
    +\frac{2a}{n}
       \bigl(q-(1-a)|\nabla f|_g^2\bigr)^2
    -2a^3|\nabla f|_g^4\\
    &
    -2a(n-1)K|\nabla f|_g^2.
\end{split}
\end{equation}
This inequality determines the choice of $a$ and explains the additional
drift term. We now take $a=1/(8n)$
and define
$$
    L_a\cdot
    =L\cdot+2a\langle\nabla f,\nabla\cdot\rangle
    =\Delta_g\cdot+2(1+a)\langle\nabla f,\nabla\cdot\rangle.
$$
Using
$$
    \bigl(q-(1-a)|\nabla f|_g^2\bigr)^2
    \ge
    \frac12(1-a)^2|\nabla f|_g^4-q^2,
$$
we infer from \eqref{eq:phi-pre} that
\begin{align*}
    L_a\Phi
    \ge{}&
    \left(1-\frac{2a}{n}\right)q^2
    +\left(\frac{a(1-a)^2}{n}-2a^3\right)|\nabla f|_g^4
    -2a(n-1)K|\nabla f|_g^2\\
    \ge{}&
    \frac{47}{512n^2}
       \left(q^2+|\nabla f|_g^4\right)
    -\frac{n-1}{4n}K|\nabla f|_g^2.
\end{align*}
By Young's inequality, we
conclude that
\begin{equation}
\label{eq:Dphi-coercive}
    L_a\Phi
    \ge
    \frac{47}{1024n^2}
       \left(q^2+|\nabla f|_g^4\right)
    -C_nK^2.
\end{equation}

{\sl Step 2. We apply the maximum principle.}
Let $\eta=\chi(r/R)$ be the standard radial Lipschitz cut-off. The estimates below hold away from the cut locus and in the barrier sense at the cut locus.
$$
    0\le\eta\le1 \quad\hbox{in }B_{2R}(p),
    \qquad
    \eta\equiv1\quad\hbox{in }B_R(p),
$$
and
\begin{equation}
\label{eq:cutoff}
    |\nabla\eta|_g
    \le\frac{C_n}{R},
    \qquad
    \Delta_g\eta
    \ge
    -C_n\left(\frac1{R^2}+\frac{\sqrt K}{R}\right).
\end{equation}
Such a cut-off function follows from the Laplacian comparison theorem. It is
smooth away from the cut locus of $p$. 

If $\eta^4\Phi\le 0$ on $B_{2R}(p)$, there is nothing to prove.
Otherwise, $\eta^4\Phi$ attains a positive maximum at some point
$x_0\in B_{2R}(p)$. We may assume that $x_0$ lies away from the
cut locus of $p$. In fact, if $x_0$ lies in the cut locus, the same computation can be carried out in the barrier sense using Calabi's trick; see \cite{Calabi58} and
\cite[Chapter I, Proof of Theorem 3.1]{SY94}.

It is clear that 
$\eta(x_0)>0$, $\Phi(x_0)>0$, and
$$
    \nabla\Phi
    =-4\Phi\frac{\nabla\eta}{\eta}\quad\text{and}\quad\nabla^2(\eta^4\Phi)~\hbox{is negative semidefinite at $x_0$.}
$$
In what follows, all calculations are performed at $x_0$.
Since $L_a(\eta^4\Phi)\le0$, a direct expansion gives
\begin{equation}
\label{eq:max-expansion}
    \eta^4L_a\Phi
    +4\eta^3\Phi\,\Delta_g\eta
    -20\eta^2\Phi|\nabla\eta|_g^2+8(1+a)\eta^3\Phi
       \langle\nabla f,\nabla\eta\rangle\leq0.
\end{equation}
Since $\Phi>0$, combining \eqref{eq:Dphi-coercive},
\eqref{eq:cutoff}, and \eqref{eq:max-expansion}, and using
$\eta\le1$, we obtain
\begin{align*}
    \eta^4\left(q^2+|\nabla f|_g^4\right)
    \le{}&
    C_n\left(\frac1{R^2}+\frac{\sqrt K}{R}\right)\eta^2\Phi
    +\frac{C_n}{R}\eta^3\Phi|\nabla f|_g+C_nK^2\\
    \le{}&
    C_n\left(\frac1{R^2}+\frac{\sqrt K}{R}\right)
    \eta^2\left(q^2+|\nabla f|_g^4\right)^{1/2}
    +\frac{C_n}{R}\left\{\eta^2
    \left(q^2+|\nabla f|_g^4\right)^{1/2}\right\}^{3/2}+C_nK^2.
\end{align*}
Hence, we apply Young's inequality to get 
$$
    \eta^4\left(q^2+|\nabla f|_g^4\right)
    \le
    C_n\left(\frac1{R^4}+K^2\right),
$$
and so
\begin{equation*}
    \eta^2\left(q^2+|\nabla f|_g^4\right)^{1/2}
    \le
    C_n\left(\frac1{R^2}+K\right).
\end{equation*}
Thus, we have 
$$
    \eta^4\Phi(x_0)
    \le
    C_n\eta^4
       \left(q^2+|\nabla f|_g^4\right)^{1/2}(x_0)
    \le
    C_n\left(\frac1{R^2}+K\right).
$$
Since $x_0$ is a point where $\eta^4\Phi$ attains its maximum, and $\eta\equiv1$ on $B_R(p)$, we obtain
$$
    \Phi
    \le
    C_n\left(\frac1{R^2}+K\right)
    \qquad\hbox{in }B_R(p),
$$
which exactly implies 
\eqref{eq:local-main}, and so the estimate 
\eqref{eq:local-gradient-root} follows immediately.
\end{proof}

\section{Proof of Theorem~\ref{thm:sharp-poisson}}\label{sec:sharp}

The final section is devoted to proving Theorem~\ref{thm:sharp-poisson} using \Cref{thm:local-CY,thm:main}.

\begin{proof}[Proof of \Cref{thm:sharp-poisson}]
Since $\Delta_g^2u=0$,  Theorem \ref{thm:main} yields that $\Delta_g u\equiv c$ for some nonnegative constant $c$. Then 
the local
Cheng--Yau type estimate \eqref{eq:local-main} for positive biharmonic functions gives
$$
    -\frac{c}{u}+\frac{|\nabla u|_g^2}{8nu^2}
    \le \frac{C(n)}{R^2}
$$
at any fixed point, where $R>0$ is arbitrary. Sending $R\to\infty$, we obtain
\begin{equation}
\label{eq:coarse-bound}
    |\nabla u|_g^2\le 8ncu.
\end{equation}
If $c=0$, we obtain that $u$ is constant. Therefore, we may assume without loss of generality that $c>0$. Set $v=\sqrt{2c^{-1}u}$ and $w=|\nabla v|_g^2$. 
Then, a direct calculation shows that 
$$\nabla v=\frac{\nabla u}{\sqrt{2cu}}\quad\hbox{and}\quad\Delta_g v=\frac{2\sqrt{2c}u\Delta_g u-\sqrt{2c}|\nabla u|_g^2}{4cu\sqrt u},$$
which implies 
\begin{equation}
\label{eq:v-equation}
    v\Delta_g v+w=1,
\end{equation}
while \eqref{eq:coarse-bound} yields
\begin{equation}
\label{eq:w-bounded}
    0\le w\le4n.
\end{equation}
Using \eqref{eq:v-equation} and the Bochner formula, we obtain
\begin{align*}
    \frac12\Delta_g w
    &=|\nabla^2v|_g^2+\Ric_g(\nabla v,\nabla v)
      +\langle\nabla v,\nabla\Delta_g v\rangle\\
    &=|\nabla^2v|_g^2+\Ric_g(\nabla v,\nabla v)
      -\frac1v\langle\nabla v,\nabla w\rangle
      +\frac{w(w-1)}{v^2}.
\end{align*}
With the curvature assumption, we have 
\begin{equation}
\label{eq:w-differential}
    v^2\Delta_g w+2v\langle\nabla v,\nabla w\rangle
    \ge 2w(w-1).
\end{equation}
Such a differential inequality enlightens us to define the operator
$$
    L_v\cdot=v^2\Delta_g\cdot+2v\langle\nabla v,\nabla\cdot\rangle.
$$

We now apply the barrier maximum principle to prove \eqref{eq:sharp-v}. 
Fix $p\in M$, set
$$
    r(x)=d_g(p,x), \quad \psi(x)=\log(1+r(x)^2).
$$
Then, away from the cut locus of $p$, we have 
\begin{align*}
|\nabla\psi|_g&=\frac{2r}{1+r^2},\\
\Delta_g\psi
    &=\frac{2(1-r^2)}{(1+r^2)^2}
      +\frac{2r}{1+r^2}\Delta_g r
    \le\frac{2n}{1+r^2},
    \end{align*}
    where the Laplacian comparison theorem is used.
We note that \eqref{eq:w-bounded} implies that $|\nabla v|_g\le2\sqrt n$ on $M$. Thus, there holds
$$
    v(x)\le v(p)+2\sqrt n\,r(x)\quad\hbox{for any $x\in M$}.
$$
Therefore, we obtain 
\begin{equation}
\label{eq:psi-bound}
    L_v\psi=v^2\Delta_g\psi+2v\langle\nabla v,\nabla\psi\rangle\le C
\end{equation}
for a constant $C$ independent of $x$. As usual, \eqref{eq:psi-bound} is
understood in the barrier sense at the cut locus by Calabi's trick.

For every $\epsilon>0$, we denote by $x_\epsilon$ the maximum point of the function $w-\epsilon\psi$. Here, we note that the existence of $x_\epsilon$ is ensured by \eqref{eq:w-bounded} and the properness of $\psi$. At $x_\epsilon$, the barrier maximum principle and
\eqref{eq:psi-bound} combined yield
$$
    L_vw(x_\epsilon)
    \le\epsilon L_v\psi(x_\epsilon)
    \le C\epsilon.
$$
It follows from \eqref{eq:w-differential} that
$$
    2w(x_\epsilon)\left(w(x_\epsilon)-1\right)
    \le C\epsilon.
$$
Moreover, we note that $w(x_\epsilon)\to\sup_Mw$ as $\epsilon\to0$. Hence, we obtain 
$$
    \bigl(\sup_Mw\bigr)\bigl(\sup_Mw-1\bigr)\le0,
$$
and so $w\le1$ on $M$, which implies \eqref{eq:sharp-v}. This completes the proof.
\end{proof}

\bibliography{bib}

@article{Calabi58,
 author = {Calabi, Eugenio},
 title = {An extension of {E}. {Hopf}'s maximum principle with an application to {Riemannian} geometry},
 fjournal = {Duke Mathematical Journal},
 journal = {Duke Math. J.},
 issn = {0012-7094},
 volume = {25},
 pages = {45--56},
 year = {1958},
 language = {English},
 doi = {10.1215/S0012-7094-58-02505-5},
 zbMATH = {3129705},
 Zbl = {0079.11801}
}

@book{SY94,
 author = {Schoen, Richard and Yau, Shing-Tung},
 title = {Lectures on differential geometry},
 fseries = {Conference Proceedings and Lecture Notes in Geometry and Topology},
 series = {Conf. Proc. Lect. Notes Geom. Topol.},
 volume = {1},
 isbn = {1-57146-012-8},
 year = {1994},
 publisher = {Cambridge, MA: International Press},
 language = {English},
 zbMATH = {770462},
 Zbl = {0830.53001}
}

@article{Ekeland1974,
 author = {Ekeland, I.},
 title = {On the variational principle},
 fjournal = {Journal of Mathematical Analysis and Applications},
 journal = {J. Math. Anal. Appl.},
 issn = {0022-247X},
 volume = {47},
 pages = {324--353},
 year = {1974},
 language = {English},
 doi = {10.1016/0022-247X(74)90025-0},
 zbMATH = {3449362},
 Zbl = {0286.49015}
}

@article{SaloffCoste1992IMRN,
 author = {Saloff-Coste, Laurent},
 title = {A note on {Poincar{\'e}}, {Sobolev}, and {Harnack} inequalities},
 fjournal = {IMRN. International Mathematics Research Notices},
 journal = {Int. Math. Res. Not.},
 issn = {1073-7928},
 volume = {1992},
 number = {2},
 pages = {27--38},
 year = {1992},
 language = {English},
 doi = {10.1155/S1073792892000047},
 zbMATH = {39647},
 Zbl = {0769.58054}
}

@article{SaloffCoste1992JDG,
 author = {Saloff-Coste, Laurent},
 title = {Uniformly elliptic operators on {Riemannian} manifolds},
 fjournal = {Journal of Differential Geometry},
 journal = {J. Differ. Geom.},
 issn = {0022-040X},
 volume = {36},
 number = {2},
 pages = {417--450},
 year = {1992},
 language = {English},
 doi = {10.4310/jdg/1214448748},
 zbMATH = {4215967},
 Zbl = {0735.58032}
}

@article{Grigoryan1992,
 author = {Grigor'yan, A. A.},
 title = {The heat equation on noncompact {Riemannian} manifolds},
 fjournal = {Mathematics of the USSR, Sbornik},
 journal = {Math. USSR, Sb.},
 issn = {0025-5734},
 volume = {72},
 number = {1},
 year = {1992},
 language = {English},
 doi = {10.1070/SM1992v072n01ABEH001410},
 zbMATH = {419064},
 Zbl = {0776.58035},
 pages={}
}

@article {Yau,
    AUTHOR = {Yau, Shing Tung},
     TITLE = {Harmonic functions on complete {R}iemannian manifolds},
   JOURNAL = {Comm. Pure Appl. Math.},
  FJOURNAL = {Communications on Pure and Applied Mathematics},
    VOLUME = {28},
      YEAR = {1975},
     PAGES = {201--228},
      ISSN = {0010-3640,1097-0312},
   MRCLASS = {53C20 (31C05)},
  MRNUMBER = {431040},
MRREVIEWER = {Yoshiaki\ Maeda},
       DOI = {10.1002/cpa.3160280203},
       URL = {https://doi.org/10.1002/cpa.3160280203},
}

@article {Kuran,
    AUTHOR = {Kuran, \"U.},
     TITLE = {Generalizations of a theorem on harmonic functions},
   JOURNAL = {J. London Math. Soc.},
  FJOURNAL = {The Journal of the London Mathematical Society},
    VOLUME = {41},
      YEAR = {1966},
     PAGES = {145--152},
      ISSN = {0024-6107,1469-7750},
   MRCLASS = {31.11},
  MRNUMBER = {192071},
MRREVIEWER = {B.\ H.\ Murdoch},
       DOI = {10.1112/jlms/s1-41.1.145},
       URL = {https://doi.org/10.1112/jlms/s1-41.1.145},
}

@article {Li-Tam,
    AUTHOR = {Li, Peter and Tam, Luen-Fai},
     TITLE = {Complete surfaces with finite total curvature},
   JOURNAL = {J. Differential Geom.},
  FJOURNAL = {Journal of Differential Geometry},
    VOLUME = {33},
      YEAR = {1991},
    NUMBER = {1},
     PAGES = {139--168},
      ISSN = {0022-040X,1945-743X},
   MRCLASS = {53C21 (53C22 53C23 53C45)},
  MRNUMBER = {1085138},
MRREVIEWER = {Chun-Li\ Shen},
       URL = {http://projecteuclid.org/euclid.jdg/1214446033},
}

@article {CM97,
    AUTHOR = {Colding, Tobias H. and Minicozzi, II, William P.},
     TITLE = {Harmonic functions on manifolds},
   JOURNAL = {Ann. of Math. (2)},
  FJOURNAL = {Annals of Mathematics. Second Series},
    VOLUME = {146},
      YEAR = {1997},
    NUMBER = {3},
     PAGES = {725--747},
      ISSN = {0003-486X,1939-8980},
   MRCLASS = {53C21 (31B05 58G30)},
  MRNUMBER = {1491451},
MRREVIEWER = {Tanya\ J.\ Christiansen},
       DOI = {10.2307/2952459},
       URL = {https://doi.org/10.2307/2952459},
}

@article {Cheng-Yau,
    AUTHOR = {Cheng, S. Y. and Yau, S. T.},
     TITLE = {Differential equations on {R}iemannian manifolds and their
              geometric applications},
   JOURNAL = {Comm. Pure Appl. Math.},
  FJOURNAL = {Communications on Pure and Applied Mathematics},
    VOLUME = {28},
      YEAR = {1975},
    NUMBER = {3},
     PAGES = {333--354},
      ISSN = {0010-3640,1097-0312},
   MRCLASS = {53C20 (58G99)},
  MRNUMBER = {385749},
MRREVIEWER = {Lung\ Ock\ Chung},
       DOI = {10.1002/cpa.3160280303},
       URL = {https://doi.org/10.1002/cpa.3160280303},
}

@article {Martinazzi,
    AUTHOR = {Martinazzi, Luca},
     TITLE = {Classification of solutions to the higher order {L}iouville's
              equation on {$\mathbb{R}^{2m}$}},
   JOURNAL = {Math. Z.},
  FJOURNAL = {Mathematische Zeitschrift},
    VOLUME = {263},
      YEAR = {2009},
    NUMBER = {2},
     PAGES = {307--329},
      ISSN = {0025-5874,1432-1823},
   MRCLASS = {53C21},
  MRNUMBER = {2534120},
MRREVIEWER = {Niels\ Martin\ M\o ller},
       DOI = {10.1007/s00209-008-0419-1},
       URL = {https://doi.org/10.1007/s00209-008-0419-1},
}

@article {Li,
    AUTHOR = {Li, Mingxiang},
     TITLE = {The total {$Q$}-curvature, volume entropy and polynomial growth
              polyharmonic functions},
   JOURNAL = {Adv. Math.},
  FJOURNAL = {Advances in Mathematics},
    VOLUME = {450},
      YEAR = {2024},
     PAGES = {Paper No. 109768, 43},
      ISSN = {0001-8708,1090-2082},
   MRCLASS = {53C18 (53C20 58J90)},
  MRNUMBER = {4754948},
MRREVIEWER = {Weihong\ Xie},
       DOI = {10.1016/j.aim.2024.109768},
       URL = {https://doi.org/10.1016/j.aim.2024.109768},
}

@book {GGS,
    AUTHOR = {Gazzola, Filippo and Grunau, Hans-Christoph and Sweers, Guido},
     TITLE = {Polyharmonic boundary value problems},
    SERIES = {Lecture Notes in Mathematics},
    VOLUME = {1991},
      NOTE = {Positivity preserving and nonlinear higher order elliptic
              equations in bounded domains},
 PUBLISHER = {Springer-Verlag, Berlin},
      YEAR = {2010},
     PAGES = {xviii+423},
      ISBN = {978-3-642-12244-6},
   MRCLASS = {35-02 (31B30 35A08 35J40 35J61 46E35 58E12)},
  MRNUMBER = {2667016},
MRREVIEWER = {Rodney\ Josu\'e\ Biezuner},
       DOI = {10.1007/978-3-642-12245-3},
       URL = {https://doi.org/10.1007/978-3-642-12245-3},
}

@article {Gover,
    AUTHOR = {Gover, A. R.},
     TITLE = {Laplacian operators and {$Q$}-curvature on conformally
              {E}instein manifolds},
   JOURNAL = {Math. Ann.},
  FJOURNAL = {Mathematische Annalen},
    VOLUME = {336},
      YEAR = {2006},
    NUMBER = {2},
     PAGES = {311--334},
      ISSN = {0025-5831,1432-1807},
   MRCLASS = {58J60 (53C20 53C21)},
  MRNUMBER = {2244375},
MRREVIEWER = {Mohameden\ Ahmedou},
       DOI = {10.1007/s00208-006-0004-z},
       URL = {https://doi.org/10.1007/s00208-006-0004-z},
}

@article {CG,
    AUTHOR = {Case, Jeffrey S. and Gover, A. Rod},
     TITLE = {The {GJMS} operators in geometry, analysis and physics},
   JOURNAL = {J. Lond. Math. Soc. (2)},
  FJOURNAL = {Journal of the London Mathematical Society. Second Series},
    VOLUME = {113},
      YEAR = {2026},
    NUMBER = {1},
     PAGES = {Paper No. e70375, 21},
      ISSN = {0024-6107,1469-7750},
   MRCLASS = {53C18 (32V05 35Q40 53C21 58J70)},
  MRNUMBER = {5011581},
       DOI = {10.1112/jlms.70375},
       URL = {https://doi.org/10.1112/jlms.70375},
}

@article{BC1,
  title={Liouville theorem for biharmonic functions on manifolds of nonnegative {Ricci} curvature},
  author={Bravo, John E and Cortissoz, Jean C},
  journal={preprint, arXiv:2511.08358},
  year={2025},
  pages={}
}

@article{BC2,
  title={A Polyharmonic Liouville Hierarchy on Complete Manifolds of Nonnegative {Ricci} Curvature},
  author={Bravo, John E and Cortissoz, Jean C},
  journal={preprint, arXiv:2512.04141},
  year={2025},
  pages={}
}

@article{WZ,
  title={The qualitative behavior for biharmonic functions on open manifolds},
  author={Wang, Lin and Zhu, Miaomiao},
  journal={preprint, arXiv:2511.09393},
  year={2025}
}

@article {CCM,
    AUTHOR = {Cheeger, J. and Colding, T. H. and Minicozzi, II, W. P.},
     TITLE = {Linear growth harmonic functions on complete manifolds with
              nonnegative {R}icci curvature},
   JOURNAL = {Geom. Funct. Anal.},
  FJOURNAL = {Geometric and Functional Analysis},
    VOLUME = {5},
      YEAR = {1995},
    NUMBER = {6},
     PAGES = {948--954},
      ISSN = {1016-443X,1420-8970},
   MRCLASS = {53C21 (58G30)},
  MRNUMBER = {1361516},
MRREVIEWER = {Man\ Chun\ Leung},
       DOI = {10.1007/BF01902216},
       URL = {https://doi.org/10.1007/BF01902216},
}

@article {CM-CPAM,
    AUTHOR = {Colding, Tobias H. and Minicozzi, II, William P.},
     TITLE = {Liouville theorems for harmonic sections and applications},
   JOURNAL = {Comm. Pure Appl. Math.},
  FJOURNAL = {Communications on Pure and Applied Mathematics},
    VOLUME = {51},
      YEAR = {1998},
    NUMBER = {2},
     PAGES = {113--138},
      ISSN = {0010-3640,1097-0312},
   MRCLASS = {53C21 (53C20 58E15)},
  MRNUMBER = {1488297},
MRREVIEWER = {Man\ Chun\ Leung},
       DOI = {10.1002/(SICI)1097-0312(199802)51:2<113::AID-CPA1>3.0.CO;2-E},
       URL =
              {https://doi.org/10.1002/(SICI)1097-0312(199802)51:2<113::AID-CPA1>3.0.CO;2-E},
}

@article {Li-Tam-92-JDG,
    AUTHOR = {Li, Peter and Tam, Luen-Fai},
     TITLE = {Harmonic functions and the structure of complete manifolds},
   JOURNAL = {J. Differential Geom.},
  FJOURNAL = {Journal of Differential Geometry},
    VOLUME = {35},
      YEAR = {1992},
    NUMBER = {2},
     PAGES = {359--383},
      ISSN = {0022-040X,1945-743X},
   MRCLASS = {53C21 (53C20 58G30)},
  MRNUMBER = {1158340},
MRREVIEWER = {Yang\ Lian\ Pan},
       URL = {http://projecteuclid.org/euclid.jdg/1214448079},
}

@article {linzh2019,
    AUTHOR = {Lin, Fanghua and Zhang, Q. S.},
     TITLE = {On ancient solutions of the heat equation},
   JOURNAL = {Comm. Pure Appl. Math.},
  FJOURNAL = {Communications on Pure and Applied Mathematics},
    VOLUME = {72},
      YEAR = {2019},
    NUMBER = {9},
     PAGES = {2006--2028},
      ISSN = {0010-3640,1097-0312},
   MRCLASS = {35K05 (35K08 58J35)},
  MRNUMBER = {3987724},
MRREVIEWER = {Juan\ C.\ Pozo},
       DOI = {10.1002/cpa.21820},
       URL = {https://doi.org/10.1002/cpa.21820},
}

@article {coldmin2021,
    AUTHOR = {Colding, Tobias Holck and Minicozzi, II, William P.},
     TITLE = {Optimal bounds for ancient caloric functions},
   JOURNAL = {Duke Math. J.},
  FJOURNAL = {Duke Mathematical Journal},
    VOLUME = {170},
      YEAR = {2021},
    NUMBER = {18},
     PAGES = {4171--4182},
      ISSN = {0012-7094,1547-7398},
   MRCLASS = {53C21},
  MRNUMBER = {4348235},
MRREVIEWER = {Jui-Tang\ Chen},
       DOI = {10.1215/00127094-2021-0015},
       URL = {https://doi.org/10.1215/00127094-2021-0015},
}

@article {rowi1959,
    AUTHOR = {Rosenbloom, P. C. and Widder, D. V.},
     TITLE = {Expansions in terms of heat polynomials and associated
              functions},
   JOURNAL = {Trans. Amer. Math. Soc.},
  FJOURNAL = {Transactions of the American Mathematical Society},
    VOLUME = {92},
      YEAR = {1959},
     PAGES = {220--266},
      ISSN = {0002-9947,1088-6850},
   MRCLASS = {41.00 (35.00)},
  MRNUMBER = {107118},
MRREVIEWER = {J.\ Blackman},
       DOI = {10.2307/1993155},
       URL = {https://doi.org/10.2307/1993155},
}

@article {Widder,
    AUTHOR = {Widder, D. V.},
     TITLE = {The role of the {A}ppell transformation in the theory of heat
              conduction},
   JOURNAL = {Trans. Amer. Math. Soc.},
  FJOURNAL = {Transactions of the American Mathematical Society},
    VOLUME = {109},
      YEAR = {1963},
     PAGES = {121--134},
      ISSN = {0002-9947,1088-6850},
   MRCLASS = {44.30},
  MRNUMBER = {154068},
MRREVIEWER = {I.\ I.\ Hirschman, Jr.},
       DOI = {10.2307/1993650},
       URL = {https://doi.org/10.2307/1993650},
}

@article {ChangWangYang,
    AUTHOR = {Chang, Sun-Yung A. and Wang, Lihe and Yang, Paul C.},
     TITLE = {A regularity theory of biharmonic maps},
   JOURNAL = {Comm. Pure Appl. Math.},
  FJOURNAL = {Communications on Pure and Applied Mathematics},
    VOLUME = {52},
      YEAR = {1999},
    NUMBER = {9},
     PAGES = {1113--1137},
      ISSN = {0010-3640,1097-0312},
   MRCLASS = {58E20},
  MRNUMBER = {1692148},
MRREVIEWER = {Giandomenico\ Orlandi},
       DOI = {10.1002/(SICI)1097-0312(199909)52:9<1113::AID-CPA4>3.0.CO;2-7},
       URL =
              {https://doi.org/10.1002/(SICI)1097-0312(199909)52:9<1113::AID-CPA4>3.0.CO;2-7},
}

@article {MonOni,
    AUTHOR = {Montaldo, S. and Oniciuc, C.},
     TITLE = {A short survey on biharmonic maps between {R}iemannian
              manifolds},
   JOURNAL = {Rev. Un. Mat. Argentina},
  FJOURNAL = {Revista de la Uni\'on Matem\'atica Argentina},
    VOLUME = {47},
      YEAR = {2006},
    NUMBER = {2},
     PAGES = {1--22},
      ISSN = {0041-6932,1669-9637},
   MRCLASS = {53C43 (31B30)},
  MRNUMBER = {2301373},
MRREVIEWER = {John\ C.\ Wood},
}

@article {Wang,
    AUTHOR = {Wang, Changyou},
     TITLE = {Stationary biharmonic maps from {$\mathbb{R}^m$} into a
              {R}iemannian manifold},
   JOURNAL = {Comm. Pure Appl. Math.},
  FJOURNAL = {Communications on Pure and Applied Mathematics},
    VOLUME = {57},
      YEAR = {2004},
    NUMBER = {4},
     PAGES = {419--444},
      ISSN = {0010-3640,1097-0312},
   MRCLASS = {58E20},
  MRNUMBER = {2026177},
MRREVIEWER = {Giandomenico\ Orlandi},
       DOI = {10.1002/cpa.3045},
       URL = {https://doi.org/10.1002/cpa.3045},
}

@article {Struwe,
    AUTHOR = {Struwe, Michael},
     TITLE = {Partial regularity for biharmonic maps, revisited},
   JOURNAL = {Calc. Var. Partial Differential Equations},
  FJOURNAL = {Calculus of Variations and Partial Differential Equations},
    VOLUME = {33},
      YEAR = {2008},
    NUMBER = {2},
     PAGES = {249--262},
      ISSN = {0944-2669,1432-0835},
   MRCLASS = {35J40 (35B65 35J55 58E20)},
  MRNUMBER = {2413109},
MRREVIEWER = {Martin\ Fuchs},
       DOI = {10.1007/s00526-008-0175-4},
       URL = {https://doi.org/10.1007/s00526-008-0175-4},
}

@article {Y3,
    AUTHOR = {Yau, Shing-Tung},
     TITLE = {Nonlinear analysis in geometry},
   JOURNAL = {Enseign. Math. (2)},
  FJOURNAL = {L'Enseignement Math\'{e}matique. Revue Internationale. 2e
              S\'{e}rie},
    VOLUME = {33},
      YEAR = {1987},
    NUMBER = {1-2},
     PAGES = {109--158},
      ISSN = {0013-8584},
   MRCLASS = {58-02 (58G30)},
  MRNUMBER = {896385},
}

@article {Y4,
    AUTHOR = {Yau, Shing-Tung},
 TITLE = {Differential geometry: partial differential equations on
              manifolds},
    Journal = {Proc. Sympos. Pure Math.},
    VOLUME = {54},
     PAGES = {Part I},
 PUBLISHER = {Amer. Math. Soc., Providence, RI},
      YEAR = {1993},
      ISBN = {0-8218-1494-X},
   MRCLASS = {53-02},
  MRNUMBER = {1216573},
       DOI = {10.1090/pspum/054.1/1216573},
       URL = {https://doi.org/10.1090/pspum/054.1/1216573},
}

@article {Y5,
    AUTHOR = {Yau, Shing-Tung},
     TITLE = {Chern---a great geometer of the twentieth century},
     PAGES = {275--319},
 Journal = {Int. Press, Hong Kong},
      YEAR = {1992},
      ISBN = {962-7670-02-2},
   MRCLASS = {01A70},
  MRNUMBER = {1201368},
       DOI = {10.1145/143164.143364},
       URL = {https://doi.org/10.1145/143164.143364},
}

@incollection {L1,
    AUTHOR = {Li, Peter},
     TITLE = {The theory of harmonic functions and its relation to geometry},
 BOOKTITLE = {Differential geometry: partial differential equations on
              manifolds},
    SERIES = {Proc. Sympos. Pure Math.},
    VOLUME = {54},
     PAGES = {307--315},
 PUBLISHER = {Amer. Math. Soc., Providence, RI},
      YEAR = {1993},
      ISBN = {0-8218-1494-X},
   MRCLASS = {53C20 (58G30)},
  MRNUMBER = {1216591},
       DOI = {10.1090/pspum/054.1/1216591},
       URL = {https://doi.org/10.1090/pspum/054.1/1216591},
}

@article {DF,
    AUTHOR = {Donnelly, Harold and Fefferman, Charles},
     TITLE = {Nodal domains and growth of harmonic functions on noncompact
              manifolds},
   JOURNAL = {J. Geom. Anal.},
  FJOURNAL = {The Journal of Geometric Analysis},
    VOLUME = {2},
      YEAR = {1992},
    NUMBER = {1},
     PAGES = {79--93},
      ISSN = {1050-6926,1559-002X},
   MRCLASS = {58G25 (53C21)},
  MRNUMBER = {1140898},
MRREVIEWER = {Robert\ Brooks},
       DOI = {10.1007/BF02921335},
       URL = {https://doi.org/10.1007/BF02921335},
}

@article {LiYau,
    AUTHOR = {Li, Peter and Yau, Shing-Tung},
     TITLE = {On the parabolic kernel of the {S}chr\"odinger operator},
   JOURNAL = {Acta Math.},
  FJOURNAL = {Acta Mathematica},
    VOLUME = {156},
      YEAR = {1986},
    NUMBER = {3-4},
     PAGES = {153--201},
      ISSN = {0001-5962,1871-2509},
   MRCLASS = {58G11 (35J10)},
  MRNUMBER = {834612},
MRREVIEWER = {Harold\ Donnelly},
       DOI = {10.1007/BF02399203},
       URL = {https://doi.org/10.1007/BF02399203},
}
\bibliographystyle{plain}

\end{document}